\documentclass[12pt,reqno]{amsart}
\usepackage[a4paper, hmargin={2.7cm,2.7cm},vmargin={3.3cm,3.3cm}]{geometry}
\usepackage[english]{babel}
\usepackage{color}
\usepackage[noadjust]{cite}
\usepackage{amsmath}
\usepackage{amssymb}
\usepackage{amsfonts}
\usepackage{amsthm}
\usepackage{enumerate}
\usepackage{amsthm}
\usepackage{dsfont}

\usepackage[textwidth=20mm]{todonotes}
\usepackage[hypertexnames=false]{hyperref}
\usepackage[nameinlink]{cleveref}
\usepackage{commath}

\usepackage{etoolbox}

\makeatletter
\providecommand*{\cupdot}{%
  \mathbin{%
    \mathpalette\@cupdot{}%
  }%
}
\newcommand*{\@cupdot}[2]{%
  \ooalign{%
    $\m@th#1\cup$\cr
    \sbox0{$#1\cup$}%
    \dimen@=\ht0 %
    \sbox0{$\m@th#1\cdot$}%
    \advance\dimen@ by -\ht0 %
    \dimen@=.5\dimen@
    \hidewidth\raise\dimen@\box0\hidewidth
  }%
}

\providecommand*{\bigcupdot}{%
  \mathop{%
    \vphantom{\bigcup}%
    \mathpalette\@bigcupdot{}%
  }%
}
\newcommand*{\@bigcupdot}[2]{%
  \ooalign{%
    $\m@th#1\bigcup$\cr
    \sbox0{$#1\bigcup$}%
    \dimen@=\ht0 %
    \advance\dimen@ by -\dp0 %
    \sbox0{\scalebox{2}{$\m@th#1\cdot$}}%
    \advance\dimen@ by -\ht0 %
    \dimen@=.5\dimen@
    \hidewidth\raise\dimen@\box0\hidewidth
  }%
}
\makeatother

\usepackage{amssymb}   
\usepackage{amsthm}    
\usepackage{thmtools}  
\usepackage{mathtools} 
\usepackage{mathrsfs}  
\usepackage{commath}
\usepackage{bbm}
\usepackage[textwidth=20mm]{todonotes}

\numberwithin{equation}{section}

\theoremstyle{plain}
\newtheorem{theorem}{Theorem}[section]
\newtheorem{lemma}[theorem]{Lemma}
\newtheorem{proposition}[theorem]{Proposition}
\newtheorem{corollary}[theorem]{Corollary}
\newtheorem{conjecture}[theorem]{Conjecture}

\theoremstyle{definition}

\theoremstyle{remark}
\newtheorem{remark}[theorem]{Remark}

\usepackage[all]{xy}

\newcommand{\N}{\mathbb{N}}
\newcommand{\Z}{\mathbb{Z}}

\newcommand\numberthis{\addtocounter{equation}{1}\tag{\theequation}}

\newcommand{\Hpi}{\mathcal{H}_{\pi}}

\DeclareMathOperator{\Rel}{rel}
\DeclareMathOperator{\covol}{covol}
\DeclareMathOperator{\loc}{loc}

\title{On the Calder\'on sum formula for wavelet systems}

\author{Ulrik Enstad}
\address{Department of Mathematics,
University of Oslo,
Moltke Moes vei 35,
0851 Oslo.}
\email{ubenstad@math.uio.no}

\author{Jordy Timo van Velthoven}
\address{Faculty of Mathematics,
University of Vienna, 
Oskar-Morgenstern-Platz 1,
1090 Vienna, Austria}
\email{jordy-timo.van-velthoven@univie.ac.at}

\keywords{Calder\'on sum, quasi-lattice, unitary representation, wavelet system}

\subjclass[2020]{42C40, 43A80}

\begin{document}

\maketitle

\begin{abstract}
We show that the Calder\'on sum formula for orthonormal wavelet bases holds for arbitrary dilation and translation matrices under a mild condition on the wavelet function. This partially solves a conjecture by Bownik and Lemvig. 
\end{abstract}

\section{Introduction}
For $\psi \in L^2 (\mathbb{R}^d)$ and matrices $A, P \in \mathrm{GL}(d, \mathbb{R})$, the associated wavelet system is given by the collection of functions
\begin{align} \label{eq:discrete_wavelet}
\big\{ |\det(A)|^{-j/2} \psi (A^j \cdot -  Pk) \big \}_{j \in \mathbb{Z}, k \in \mathbb{Z}^d}.
\end{align}
In higher dimensions, a classical condition in the study of wavelet systems is to assume that the matrix $A$ preserves the integer lattice $\mathbb{Z}^d$, i.e., $A \mathbb{Z}^d \subseteq \mathbb{Z}^d$, and that $A$ is expansive, i.e., all its eigenvalues are strictly greater than one in modulus. Under such conditions, a full characterization of Parseval wavelet frames, or more generally dual wavelet frames, was obtained in \cite{chui2002characterization, bownik2000characterization, calogero2000characterization}, among others. In addition, the existence of wavelet bases for such dilations were shown in \cite{dai1997wavelet}. Both types of results extend classical results from dimension one to arbitrary dimensions. 

Beyond the case of expansive dilation matrices, the theory of wavelet systems in higher dimensions is far less complete. Nevertheless, the existence of wavelet bases for nonexpansive dilation matrices has been studied in \cite{speegle2003existence, bownik2017wavelets, ionascu2006simultaneous, wang2002wavelets} and culminated in the recent breakthrough \cite{bownik2021simultaneous} that characterizes the dilation matrices admitting wavelet sets. On the other hand, the aforementioned characterization of Parseval wavelet frames is currently only known for special dilations, such as amplifying dilations \cite{laugesen2002translational}, dilations expanding on a subspace \cite{hernandez2002unified, guo2006some} or dilations satisfying the lattice counting estimate \cite{bownik2017wavelets}. This has lead to the following conjecture \cite[Conjecture 1]{bownik2017wavelets} and open problem \cite[Problem 3.3]{bownik2020open}, which was already implicitly raised in \cite[p. 177]{speegle2003existence}.\footnote{The formulations in \cite{bownik2017wavelets, bownik2020open} are under the implicit assumption that $|\det(P)| = 1$.}

\begin{conjecture}[\cite{bownik2017wavelets, bownik2020open, speegle2003existence}] \label{conj:wavelet}
Let $A, P \in \mathrm{GL}(d, \mathbb{R})$ and $\psi \in L^2 (\mathbb{R}^d)$. Suppose that \[ \{|\det(A)|^{-j/2} \psi (A^j \cdot -  Pk) \}_{j \in \mathbb{Z}, k \in \mathbb{Z}^d}\]
is an orthonormal basis, or more generally a Parseval frame, for $L^2 (\mathbb{R}^d)$. Then the Calder\'on sum formula holds:
\begin{align} \label{eq:calderon}
\sum_{j \in \mathbb{Z}} |\widehat{\psi} ((A^t)^j \xi)|^2 = |\det(P)| \quad \text{for a.e.} \quad \xi \in \mathbb{R}^d.
\end{align}
\end{conjecture}

The Calder\'on sum formula \eqref{eq:calderon} is part of the aforementioned characterizion of Parseval wavelet frames known under additional assumptions on the dilation matrix. The upper bound for the Calder\'on sum is known to hold for any Bessel sequence with bound $1$ (cf. \cite[Proposition 4.1]{hernandez2002unified}).

In this paper, we present a new approach to \Cref{conj:wavelet} which allows us to prove the Calder\'on sum formula for arbitrary translation and dilation matrices under a mild condition on the wavelet function. Our approach is based on a relation between the frame properties of the discrete wavelet system \eqref{eq:discrete_wavelet} and the semi-continuous wavelet system whose elements are given by
\begin{align} \label{eq:continuous_wavelet}
\pi(x, A^j) \psi := |\det(A)|^{-j/2} \psi(A^{-j} ( \cdot - x)), \quad x \in \mathbb{R}^d, j \in \mathbb{Z}.
\end{align}
The action $\pi$ forms a unitary group representation of the semi-direct product group $G = \mathbb{R}^d \rtimes \langle A \rangle$ of $\mathbb{R}^d$ and the cyclic group $\langle A \rangle := \{ A^j : j \in \mathbb{Z} \}$ generated by $A \in \mathrm{GL}(d, \mathbb{R})$. Observe that the  wavelet system \eqref{eq:discrete_wavelet} corresponds to $\{\pi(A^j P k, A^j) \psi : j \in \mathbb{Z}, k \in \mathbb{Z}^d \}$. We will assume a mild condition on the wavelet function. Namely, we assume that $\psi \in L^2 (\mathbb{R}^d)$ is such that $\pi(\Lambda) \psi$ is a Bessel sequence in $L^2 (\mathbb{R}^d)$ for all relatively separated sets $\Lambda $ in $ G = \mathbb{R}^d \rtimes \langle A \rangle$; in notation, $\psi \in \mathcal{B}_{\pi}$. This is a common assumption in the study of frames in the orbit of a group representation, see, e.g., \cite{Gr08, FuGr07, enstad2025dynamical, fuehr2017density, caspers2023overcompleteness}.
We refer to Section \ref{sec:coefficient} for an alternative description of the space $\mathcal{B}_{\pi}$ and further properties.

Using the notation from the previous paragraph, our main result is the following:

\begin{theorem} \label{thm:calderon_intro}
Let $A, P \in \mathrm{GL}(d, \mathbb{R})$.
If $\psi \in \mathcal{B}_{\pi}$ and
$
 \{\pi(A^j P k, A^j) \psi \}_{ j \in \mathbb{Z}, k \in \mathbb{Z}^d } 
$
is a Parseval frame for $L^2 (\mathbb{R}^d)$, then $\{ \pi(x, A^j) \psi \}_{x \in \mathbb{R}^d, j \in \mathbb{Z}}$ is a tight frame for $L^2 (\mathbb{R}^d)$, and 
\[
\sum_{j \in \mathbb{Z}} |\widehat{\psi} ((A^t)^j \xi)|^2 = |\det (P)| \quad \text{for a.e.} \;\; \xi \in \mathbb{R}^d.
\]
\end{theorem}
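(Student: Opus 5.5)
The plan is to pass from the discrete Parseval frame to the semi-continuous system by averaging over translations, and then to read off the Calder\'on sum by a Fourier-side computation.

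\textbf{Step 1 (translation invariance of the frame property).} The discrete index set $\Lambda = \{(A^j P k, A^j) : j \in \mathbb{Z}, k \in \mathbb{Z}^d\}$ is a relatively separated subset of $G = \mathbb{R}^d \rtimes \langle A \rangle$. For $y \in \mathbb{R}^d$, the translated system $\pi((y,I)\Lambda)\psi = \pi(y,I)\pi(\Lambda)\psi$ is again a Parseval frame, because $\pi(y,I)$ is unitary. So for every $f \in L^2(\mathbb{R}^d)$ and every $y$,
\[
\|f\|^2 = \sum_{j,k} |\langle f, \pi((y,I)(A^jPk,A^j))\psi\rangle|^2 = \sum_{j,k} |\langle f, \pi(y + A^jPk, A^j)\psi\rangle|^2 .
\]

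\textbf{Step 2 (averaging over a fundamental domain).} The natural move is to integrate $y$ over a fundamental domain $\Omega$ of $P\mathbb{Z}^d$. For fixed $j$, the sets $y + A^jPk$ with $y\in\Omega$ tile the cosets of $A^jP\mathbb{Z}^d$, which is a different lattice for each $j$. A naive unfolding would therefore give $\sum_j |\det A|^{-j}|\det P|^{-1}\int |\langle f,\pi(x,A^j)\psi\rangle|^2\,dx$ only after rescaling, and this produces weights that do not match.

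Instead I would use the structure of $G$: the conjugate lattices $(0,A^{-n})\Lambda(0,A^{n})$ and the uniform density theory for relatively separated sets in $G$. This is where $\psi \in \mathcal{B}_\pi$ enters. It gives that $x \mapsto \langle f, \pi(x,A^j)\psi\rangle$ is controlled uniformly, so that Riemann-sum-type averages $\frac{1}{|B_R|}\int_{B_R}\sum_{(x,A^j)\in (y,I)\Lambda} \cdots \, dy$ converge, with $\mathcal{B}_\pi$ supplying dominated convergence. Concretely, for each $j$ I average $y$ over $A^jP\Omega$ and normalise by its volume $|\det A|^j|\det P|$. Using $|\langle f,\pi(x,A^j)\psi\rangle|^2$ together with the left Haar measure $dx$ (counting in $j$), I aim for the identity
\[
\sum_{j\in\mathbb{Z}} \int_{\mathbb{R}^d} |\langle f,\pi(x,A^j)\psi\rangle|^2 |\det A|^{-j}\,dx = |\det P|\,\|f\|^2 .
\]
This says the continuous system is tight with respect to the natural measure on $G$, with bound $|\det P|$.

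The per-$j$ averaging is not directly legitimate, because $y$ is shared across all $j$. I would replace it by a density/Ces\`aro argument: average over $y\in B_R$ and let $R\to\infty$. For each $j$, the average of $\sum_k F_j(y+A^jPk)$ over $B_R$ tends to $(|\det A|^{j}|\det P|)^{-1}\int F_j$, where $F_j(x)=|\langle f,\pi(x,A^j)\psi\rangle|^2$. The interchange of the limit with $\sum_j$ is justified by domination from $\psi\in\mathcal{B}_\pi$, applied for instance first to $f$ with compactly supported $\hat f$ away from $0$ and then extended by density. \textbf{I expect this interchange to be the main obstacle}, since the lattices $A^jP\mathbb{Z}^d$ degenerate as $j\to\pm\infty$ for nonexpansive $A$. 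The uniform Bessel bound over all relatively separated sets in $\mathcal{B}_\pi$ (applied to unions of translates) is what I would use to control the tails.

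\textbf{Step 3 (Calder\'on sum).} By Plancherel,
\[
\int |\langle f,\pi(x,A^j)\psi\rangle|^2dx = |\det A|^{j}\int |\hat f(\xi)|^2|\hat\psi((A^t)^j\xi)|^2\,d\xi .
\]
Substituting into the identity of Step 2 gives
\[
\int |\hat f(\xi)|^2 \sum_j |\hat\psi((A^t)^j\xi)|^2\,d\xi = |\det P| \int|\hat f|^2 \quad \text{for all } f .
\]
Since $f$ is arbitrary, this forces $\sum_j|\hat\psi((A^t)^j\xi)|^2=|\det P|$ almost everywhere.
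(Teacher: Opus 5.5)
Your route is correct, but it differs from the paper's. The paper passes to the hull $\Omega(\Lambda)$ and uses amenability of $G$ to get a $G$-invariant probability measure $\nu$ on it. It then shows that the frame bounds persist for every $\Gamma \in \Omega(\Lambda)$ (Lemma~\ref{lem:passes-to-hull}, which is where $\psi \in \mathcal{B}_\pi$ enters). Finally it integrates the frame inequality against $\nu$ via the periodization identity \eqref{eq:covolume}, with $\covol_\nu(\Lambda) = \rho_G(C) = |\det P|$ for the quasi-lattice.

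You instead take an explicit F\o lner average over the normal subgroup $\mathbb{R}^d$ of actual translates $(y,I)\Lambda$. Each translate is a Parseval frame by unitarity, so you need no limit sets, no Chabauty--Fell topology and no invariant measures. Since $\mathbb{R}^d$ acts within each layer, and $\sum_k F_j(y + A^jPk) \leq \|f\|^2$ is $A^jP\mathbb{Z}^d$-periodic in $y$, the Ces\`aro means over $B_R$ recover exactly the densities $|\det(A^jP)|^{-1}$. These are precisely the left Haar weights, so the weights do match. Fatou then gives $\|C_\psi f\|^2 \leq |\det P|\, \|f\|^2$ with no assumption on $\psi$, mirroring the paper.

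What each approach buys: the paper's framework covers arbitrary amenable groups, Delone sets and projective representations, and yields covolume bounds for separated and dense sets. Yours gives a short, self-contained argument tailored to $\mathbb{R}^d \rtimes \langle A \rangle$.

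For the reverse inequality, make the crux precise. Neither ``domination'' nor a uniform Bessel bound alone is what you need. What you need is
\[
\sup_{y \in \mathbb{R}^d} \sum_{|j|>N} \sum_{k} F_j(y+A^jPk) \to 0 \quad \text{as } N \to \infty .
\]
This follows directly from Lemma~\ref{lem:sampling}: all translates $(y,I)\Lambda$ have the same $\Rel_Q$, and a compact subset of $G$ meets only finitely many layers $\mathbb{R}^d \times \{A^j\}$. Your ``unions of translates'' idea also works if you glue pieces of different translates living on disjoint blocks of $j$. That keeps the union relatively separated, so a failure of uniform tail decay would contradict the Bessel property. With this in hand, the detour through $f$ with $\widehat{f}$ compactly supported away from $0$, and the remarks on conjugate lattices, are unnecessary.
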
 

A combination of \Cref{thm:calderon_intro} with the known fact that  tight frames of the form $\{ \pi(x, A^j) \psi \}_{x \in \mathbb{R}^d, j \in \mathbb{Z}}$  exist only when $|\det(A)| \neq 1$ (cf. \cite{larson2006explicit, laugesen2002characterization}) yields the following consequence.

\begin{corollary}
    Let $A, P \in \mathrm{GL}(d, \mathbb{R})$.
If there exists a function $\psi \in \mathcal{B}_{\pi}$ such that
$
\big \{\pi(A^j P k, A^j) \psi \big\}_{ j \in \mathbb{Z}, k \in \mathbb{Z}^d } 
$
is a Parseval frame for $L^2 (\mathbb{R}^d)$, then  $|\det(A)| \neq 1$.
\end{corollary}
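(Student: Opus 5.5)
The plan is to argue by contraposition and reduce everything to \Cref{thm:calderon_intro} together with the cited nonexistence result for semi-continuous tight frames. Suppose $\psi \in \mathcal{B}_{\pi}$ is such that $\{\pi(A^j P k, A^j)\psi\}_{j \in \mathbb{Z}, k \in \mathbb{Z}^d}$ is a Parseval frame for $L^2(\mathbb{R}^d)$. The hypotheses of \Cref{thm:calderon_intro} are then satisfied verbatim: the same $A$, $P$ and $\psi$, the assumption $\psi \in \mathcal{B}_{\pi}$, and the Parseval property. So the first step is simply to invoke \Cref{thm:calderon_intro} and conclude that the semi-continuous system $\{\pi(x, A^j)\psi\}_{x \in \mathbb{R}^d, j \in \mathbb{Z}}$ is a tight frame for $L^2(\mathbb{R}^d)$. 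The tightness here is with respect to the measure on $G = \mathbb{R}^d \rtimes \langle A \rangle$ given by Lebesgue measure in $x$ times counting measure in $j$.

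Before using this, I will note that the tight frame bound is strictly positive. By the second conclusion of \Cref{thm:calderon_intro}, the Calder\'on sum equals $|\det(P)| > 0$ almost everywhere. Consequently $\psi \neq 0$, and the frame is a genuine (non-degenerate) tight frame rather than a trivial one.

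The second step is to apply the known fact from \cite{larson2006explicit, laugesen2002characterization}: a system of the form $\{\pi(x, A^j)\psi\}_{x \in \mathbb{R}^d, j \in \mathbb{Z}}$ can be a tight frame for $L^2(\mathbb{R}^d)$ only when $|\det(A)| \neq 1$. Combining the two steps immediately gives $|\det(A)| \neq 1$, which is the claim.

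I do not expect a real obstacle, since all the analytic work is contained in \Cref{thm:calderon_intro}. The only point needing care is that the cited nonexistence result is formulated for exactly this notion of tight frame, i.e. continuous in $x$ and discrete in the dilation parameter $j$. If I wanted a self-contained argument instead, I would start from the Calder\'on identity $\sum_j |\widehat{\psi}((A^t)^j\xi)|^2 = |\det(P)|$ and assume $|\det A| = 1$, so that each map $\xi \mapsto (A^t)^j \xi$ preserves Lebesgue measure. I would then try to derive a contradiction with $\|\widehat{\psi}\|_2 < \infty$ by integrating over a suitable $\langle A^t\rangle$-invariant region, or a fundamental domain for the action of $\langle A^t \rangle$. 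Choosing such a region correctly is the delicate step, which is precisely what the cited references handle, so I would rely on the citation.
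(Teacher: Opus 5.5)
Your proposal is the paper's argument: the paper also proves the corollary by combining \Cref{thm:calderon_intro} with the cited fact that tight frames $\{\pi(x,A^j)\psi\}_{x\in\mathbb{R}^d,\,j\in\mathbb{Z}}$ exist only when $|\det(A)|\neq 1$, and your observation that the tight bound is $|\det(P)|>0$ is a correct way to rule out the degenerate case. One small correction: the tightness in \Cref{thm:calderon_intro} is with respect to the left Haar measure $d\mu_G(x,A^j)=|\det(A)|^{-j}\,dx\,d\mu_c(j)$, whereas Lebesgue times counting measure is the right Haar measure; the two coincide when $|\det(A)|=1$, so your contrapositive is unaffected.
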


Our proof of \Cref{thm:calderon_intro} consists of showing lower and upper bounds on the norm of the continuous wavelet transform associated to the semi-continuous system \eqref{eq:continuous_wavelet} in terms of the frame bounds of the discrete wavelet frame \eqref{eq:discrete_wavelet}. In the case of a discrete Parseval wavelet frame \eqref{eq:discrete_wavelet}, the obtained \emph{identity} shows precisely that the semi-continuous system \eqref{eq:continuous_wavelet} is a continuous tight frame as asserted in \Cref{thm:calderon_intro}. 
Inequalities relating the continuous wavelet transform and the frame bounds of a discrete frame are classical for isotropic dilations, see, e.g., \cite{daubechies1992ten, chui1993inequalities, fuehr1996wavelet}. For general dilation matrices, only an upper bound of for such type of inequalities is known in general \cite{aniello2001discrete}.\footnote{Although the corresponding inequality involving the lower frame bound is also asserted in \cite{aniello2001discrete}, the claimed proof is incomplete; see also \Cref{rem:frame_bounds}.} We present a new approach towards obtaining both lower and upper bounds by interpreting such estimates as inequalities relating the norm of the wavelet transform, the frame bounds of a frame and the density of its point set. For this, we use a notion of density of a point set (the so-called \emph{covolume} with respect to an invariant measure on the hull of the given point set) recently introduced for unimodular groups in \cite{enstad2025dynamical}; see \Cref{sec:covolume} for precise details. Using this notion in the setting of nonunimodular groups, we derive inequalities (cf. \Cref{cor:quasi-lattice}) relating the covolume of the point set and the frame bounds of the frame that generalize similar inequalities for lattices in unimodular groups obtained in \cite{Rova22}; see also \cite{balan2006density} for such inequalities for Gabor systems.
As in \cite{Rova22, enstad2025dynamical}, the actual proof for such inequalities is based on the simple idea of periodizing the norm of the wavelet transform and applying the corresponding frame inequalities to each integrand. 

The above outlined proof method works naturally for general (projective) unitary representations of locally compact groups. As the auxiliary results might be of interest beyond wavelet systems, we present our results in this general setting. In particular, this allows us to obtain an extension of the main result of \cite{aniello2001discrete} for amenable groups beyond the setting of quasi-regular representations of semi-direct products (see \Cref{rem:frame_bounds}).

The paper is organized as follows. Section \ref{sec:pointsets} is devoted to background on point sets in locally compact groups. In particular, the notion of covolume is discussed here. Inequalities relating the frame bounds, covolume and norm of the wavelet transform are obtained in Section \ref{sec:frame}. Lastly, we show in Section \ref{sec:calderon} how Theorem \ref{thm:calderon_intro} can be derived from the general results of Section \ref{sec:frame}.

\section{Point sets in locally compact groups} \label{sec:pointsets}
Let $G$ be a second countable locally compact group. We fix a left and right Haar measure on $G$ denoted by $\lambda_G$ and $\rho_G$, respectively.

\subsection{Delone sets}
 Given a set $S \subseteq G$, we call a subset $\Lambda \subseteq G$ 
\begin{enumerate}
    \item \emph{$S$-separated} if $\#( \Lambda \cap g S) \leq 1$ for all $g \in G$, and
    \item \emph{$S$-dense} if $\#( \Lambda \cap g S ) \geq 1$ for all $g \in G$.
\end{enumerate}
A set $\Lambda$ is called \emph{separated} if it is $U$-separated for some nonempty open set $U \subseteq G$, and \emph{relatively dense} if it is $K$-dense for some compact set $K \subseteq G$. It is called a \emph{Delone set} if it is both separated and relatively dense. A set $\Lambda$ is called \emph{relatively separated} if it is a finite union of separated sets. Equivalently, $\Lambda$ is relatively separated if
\[
\Rel_Q (\Lambda) := \sup_{g \in G} \#(\Lambda \cap gQ ) < \infty
\]
for some (equivalently, all) nonempty open relatively compact sets $Q \subseteq G$.

A set $\Lambda$ is called a \emph{quasi-lattice} with complement $C$ if $G$ is the disjoint union of the sets $\lambda C$ for $\lambda \in \Lambda$. Note that this is equivalent to $\Lambda$ being both $C^{-1}$-separated and $C^{-1}$-dense. We will be interested in the case when $C$ is \emph{Jordan measurable}, that is, $\mu_G(\partial C) = 0$. 
Note that any lattice subgroup $\Lambda \subseteq G$ is a quasi-lattice. However, in contrast to lattice subgroups, a quasi-lattice does exist in every connected, simply connected solvable Lie group, cf. \cite[Proposition 5.10]{FuGr07}. 

\subsection{The hull of a point set}
Let $\mathcal{C}(G)$ be the set of all closed subsets of $G$. We equip $\mathcal{C}(G)$ with the so-called \emph{Chabauty--Fell topology}, which is the topology determined by the open neighborhood basis at each $C \in \mathcal{C}(G)$ consisting of the sets
\[
\mathcal{V}_{K, U} (C) := \big\{ C' \in \mathcal{C}(G) : C' \cap K \subseteq CU, \; \; C \cap K \subseteq C' U \big\},
\]
where $K$ and $U$ run through all compact subsets and open unit neighorhoods of $G$, respectively.  
We always consider $\mathcal{C}(G)$ as a topological space equipped with the Chabauty--Fell topology. The space $\mathcal{C}(G)$ is compact, metrizable and second-countable. 
We refer to \cite[Appendix A]{BjHaPo18} for further details on the Chabauty--Fell topology.

The group $G$ acts continuously on $\mathcal{C}(G)$ by means of the action
\[
\{ x C : x \in G \} \quad \text{for} \;\;\;  x \in G \quad \text{and} \quad C \in \mathcal{C}(G). 
\]
The closure of the orbit of a set $\Lambda \in \mathcal{C}(G)$ in $\mathcal{C}(G)$ is called the \emph{hull} of $\Lambda$ and is denoted by
\[
\Omega (\Lambda) := \overline{ \{x \Lambda : x \in G \}} \subseteq \mathcal{C}(G).
\]
The following lemma corresponds to \cite[Proposition 2.2]{enstad2025dynamical}.

\begin{lemma}\label{lem:relsep-reldense-passes-to-hull}
Let $U \subseteq G$ be open and let $K \subseteq G$ be compact. Then the following assertions hold:
\begin{enumerate}[(i)]
    \item If $\Lambda \subseteq G$ is $U$-separated, then so is every $\Gamma \in \Omega(\Lambda)$.
    \item If $\Lambda \subseteq G$ is $K$-dense, then so is every $\Gamma \in \Omega(\Lambda)$.
\end{enumerate}
\end{lemma}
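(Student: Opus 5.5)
Both assertions follow the same scheme. First I check that the property is invariant under left translation. Then I show that the set of closed sets having the property is closed in $\mathcal{C}(G)$ for the Chabauty--Fell topology. Given both facts, the orbit $\{x\Lambda : x\in G\}$ lies in a closed set, and hence so does its closure $\Omega(\Lambda)$.

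Translation invariance is immediate. For $x \in G$ we have $\#(x\Lambda \cap gS) = \#(\Lambda \cap x^{-1}gS)$. Since $g \mapsto x^{-1}g$ is a bijection of $G$, the condition $\#(\Lambda\cap gS)\le 1$ (respectively $\ge 1$) for all $g$ transfers to $x\Lambda$. It therefore remains to prove closedness. Because $\mathcal{C}(G)$ is metrizable, I can argue with sequences. Let $x_n\Lambda \to \Gamma$. I use the standard description of Chabauty--Fell convergence of closed sets $C_n \to C$: every $\gamma\in C$ is a limit of points $c_n \in C_n$, and every cluster point of a sequence $c_{n_k}\in C_{n_k}$ lies in $C$. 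This follows from the neighbourhoods $\mathcal{V}_{K,U}$, since $G$ is locally compact and second countable.

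For (i), suppose $\Gamma$ is not $U$-separated. Then there are distinct $\gamma_1,\gamma_2 \in \Gamma\cap gU$ for some $g$. Since $U$ is open, I can choose a unit neighbourhood $V$ with $\gamma_iV \subseteq gU$ and $\gamma_1V\cap\gamma_2V=\emptyset$. By the approximation property, for large $n$ there are points $\lambda_i^{(n)}\in x_n\Lambda\cap \gamma_iV$. These two points are distinct and both lie in $gU$, which contradicts the $U$-separatedness of $x_n\Lambda$. Note that this argument needs $U$ open.

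For (ii), fix $g\in G$. By density, each $x_n\Lambda$ contains a point $\lambda_n \in gK$. Since $gK$ is compact, a subsequence converges to some $\gamma\in gK$. By the cluster-point property, $\gamma\in\Gamma$, so $\#(\Gamma\cap gK)\ge 1$. This argument needs $K$ compact.

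The only step requiring care is translating the basic neighbourhoods $\mathcal{V}_{K,U}(C)$ into these two sequential properties. For the approximation property, I take $K$ to be a compact neighbourhood of $\gamma$ and let $U$ shrink. The cluster-point property is proved by contradiction: if a cluster point $c\notin C$, I separate $c$ from the closed set $C$ using a compact neighbourhood $K$ of $c$ and a small unit neighbourhood $U$ with $CU\cap K'=\emptyset$ on a smaller neighbourhood $K'$ of $c$. I expect this to be the main (though routine) obstacle.
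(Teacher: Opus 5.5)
Your proof is correct. The two ingredients settle both assertions: the properties are invariant under left translation, and they are stable under Chabauty--Fell limits. You verify the latter through the sequential (Kuratowski) description of convergence, which you correctly derive from the basic neighbourhoods $\mathcal{V}_{K,U}(C)$.

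The paper gives no argument of its own here; it simply cites \cite[Proposition 2.2]{enstad2025dynamical}. Your argument supplies a complete, self-contained proof along the standard lines.
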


Lastly, we state the following result, which is a special case of \cite[Proposition 3.3]{enstad2025dynamical}.

\begin{lemma} \label{lem:convergence_sets}
Let $U \subseteq G$ be an open set. Let $(\Lambda_n)_{n \in \mathbb{N}}$ be a sequence of $U$-separated sets that converges to a set $\Gamma \in \mathcal{C}(G)$. Let $K \subseteq G$ be a compact subset such that $\Gamma \cap \partial K = \emptyset$ and write $\Gamma \cap K = \{\gamma^{(1)}, ..., \gamma^{(k)} \}$. Then there exists $n_0 \in \mathbb{N}$ such that for every $n \geq n_0$ there exists $\mu_n^{(j)} \in \Lambda_n$, $1 \leq j \leq k$, satisfying 
\[
\Lambda_n \cap K = \{ \mu^{(1)}, ..., \mu^{(k)} \}
\]
and $\mu_n^{(j)} \to \gamma^{(j)}$ as $n \to \infty$ for each $1 \leq j \leq k$.
\end{lemma}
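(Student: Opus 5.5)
The idea is to compare $\Lambda_n$ with $\Gamma$ locally: first cover each point of $\Gamma\cap K$ by a small neighbourhood, then show that for large $n$ each such neighbourhood contains exactly one point of $\Lambda_n$, and that $\Lambda_n$ has no points in $K$ outside these neighbourhoods. Throughout we use that $\Gamma$ is itself $U$-separated. This follows from the argument of \Cref{lem:relsep-reldense-passes-to-hull}(i), since that argument only uses that a Chabauty--Fell limit of $U$-separated sets is $U$-separated. Since $U$ is a nonempty open set, $\Gamma\cap K$ is therefore finite, say $\{\gamma^{(1)},\dots,\gamma^{(k)}\}$.

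\textbf{Step 1: choosing neighbourhoods.} Since $\Gamma\cap\partial K=\emptyset$, each $\gamma^{(j)}$ lies in the interior of $K$. Fix a symmetric open unit neighbourhood $V$ with the following properties:
\begin{enumerate}[(a)]
\item the sets $\gamma^{(j)}\overline{V}$ are pairwise disjoint and contained in $\operatorname{int}K$;
\item $V^{-1}V\subseteq U'$ for some translate-compatible small set, chosen so that any two points of the same $\gamma^{(j)}V$ lie in a common translate $gU$.
\end{enumerate}
For (b) it suffices to take $V$ with $VV^{-1}\subseteq u^{-1}U$ for some $u\in U$. Then $x,y\in\gamma V$ gives $x,y\in \gamma V u^{-1}\cdots$. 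More carefully, we pick $u\in U$ and $W$ with $Wu\subseteq U$, and set $V$ so that $\gamma V\subseteq \gamma W^{\prime}$ lies in a single translate $(\gamma u^{-1})U$. This is possible since $\gamma V u\subseteq \gamma U$ is wanted, i.e. $Vu\subseteq U$ for $V$ small. With this choice, $U$-separation forces $\#(\Lambda_n\cap\gamma^{(j)}V)\le1$.

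\textbf{Step 2: existence and exclusion.} For existence, apply the neighbourhood $\mathcal{V}_{K,V}(\Gamma)$. For $n$ large we get $\Gamma\cap K\subseteq\Lambda_nV$, so each $\gamma^{(j)}\in\mu V$ for some $\mu\in\Lambda_n$. Hence $\mu\in\gamma^{(j)}V^{-1}=\gamma^{(j)}V$, and this is the unique point $\mu_n^{(j)}$ of $\Lambda_n$ in $\gamma^{(j)}V\subseteq K$.

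For exclusion, the compact set $K':=K\setminus\bigcup_j\gamma^{(j)}V'$, with $V'$ a smaller neighbourhood satisfying $\overline{V'}V\subseteq V$, is disjoint from $\Gamma$. By compactness there is a unit neighbourhood $V''$ with $K'\cap\Gamma V''=\emptyset$. The first inclusion in $\mathcal{V}_{K,V''}(\Gamma)$ gives $\Lambda_n\cap K'\subseteq\Gamma V''$, hence $\Lambda_n\cap K'=\emptyset$. Combining with the inclusion $\Gamma\cap K\subseteq \Lambda_n V'$ yields $\Lambda_n\cap K=\{\mu_n^{(1)},\dots,\mu_n^{(k)}\}$ for $n\ge n_0$.

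\textbf{Step 3: convergence.} Convergence $\mu_n^{(j)}\to\gamma^{(j)}$ follows by rerunning the existence argument with arbitrarily small $V$. Uniqueness within $\gamma^{(j)}V$ identifies the point obtained this way with $\mu_n^{(j)}$.

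The main obstacle is the bookkeeping in Step 1. On a noncommutative group with the right-multiplied neighbourhoods $CU$ of the Chabauty--Fell basis, one must get the left/right placement of $V$ correct so that a whole $\gamma V$ fits inside one left translate $gU$. I would handle this by choosing $u\in U$ and $V$ with $\gamma V\subseteq \gamma u^{-1}\cdot uVu^{-1}\cdots$; simplest is to take $V$ such that $u^{-1}Vu$-type conjugation is avoided by writing $\gamma V = (\gamma u^{-1})(uV)$ and requiring $uV\subseteq U$, which holds for $V$ small by continuity of multiplication.
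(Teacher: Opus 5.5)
Your argument is correct and is the natural direct proof. The paper gives no proof of this lemma; it quotes it as a special case of \cite[Proposition 3.3]{enstad2025dynamical}. Your proposal is therefore a self-contained verification straight from the neighbourhood basis $\mathcal{V}_{K,U}(\Gamma)$:
the second inclusion $\Gamma\cap K\subseteq\Lambda_n V$ produces a point of $\Lambda_n$ in each $\gamma^{(j)}V\subseteq K$, and $U$-separation makes it unique. The first inclusion, applied with a neighbourhood $V''$ for which $\Gamma V''$ misses the compact set $K\setminus\bigcup_j\gamma^{(j)}V$, rules out any further points of $\Lambda_n\cap K$. Finally, shrinking $V$ and using uniqueness gives $\mu_n^{(j)}\to\gamma^{(j)}$.

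Three points need tidying. First, the requirement $\overline{V'}V\subseteq V$ cannot be satisfied in general: in $\mathbb{R}$ with $V=(-1,1)$ it forces $\overline{V'}=\{0\}$. All you actually use is $\gamma^{(j)}V'\subseteq\gamma^{(j)}V$, so take $V'\subseteq V$, or simply $V'=V$.

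Second, Step 1(b) can be stated in one line. Fix $u\in U$ (so $U$ must be assumed nonempty) and choose $V$ symmetric with $uV\subseteq U$. Then $\gamma V=(\gamma u^{-1})(uV)\subseteq(\gamma u^{-1})U$, which meets each $U$-separated $\Lambda_n$ in at most one point. The remarks about $V^{-1}V$, $VV^{-1}$ and conjugation are unnecessary.

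Third, \Cref{lem:relsep-reldense-passes-to-hull} concerns elements of a hull $\Omega(\Lambda)$, i.e.\ limits of translates of a single set. Here the $\Lambda_n$ are arbitrary $U$-separated sets, so you should prove directly that $\Gamma$ is $U$-separated. Suppose $x\neq y$ both lie in $\Gamma\cap gU$. Pick a symmetric unit neighbourhood $W$ with $xW$ and $yW$ disjoint and contained in $gU$. The second inclusion for $\Lambda_n\in\mathcal{V}_{\{x,y\},W}(\Gamma)$ then gives, for large $n$, points of $\Lambda_n$ in $xW$ and in $yW$. These are two distinct points of $\Lambda_n\cap gU$, a contradiction. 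With these repairs the proof is complete.
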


\subsection{Covolume of point sets} \label{sec:covolume}
Let $\Lambda \subseteq G$ be a Delone set. For $f \in C_c(G)$, we can form its periodization $\mathcal{P} f \in C_c(\Omega(\Lambda))$  by
\[ (\mathcal{P}f)(\Gamma) = \sum_{\gamma \in \Gamma} f(\gamma) , \quad \Gamma \in \Omega(\Lambda) .  \]
A Radon probability measure $\nu$ on $\Omega(\Lambda)$ is called \emph{invariant} if $\nu( g A) = \nu(A)$ for all $g \in G$ and measurable $A \subseteq \Omega(\Lambda)$. For such a measure $\nu$, the map
\[  f \mapsto \int_{\Omega(\Lambda)} \mathcal{P}(f) \dif{\nu}, \]
is a $G$-invariant linear functional on $C_c(G)$, hence is proportional to integrating against the left Haar measure $\mu_G$. Following \cite{enstad2025dynamical}, we call the inverse of this proportionality constant $\covol_\nu(\Lambda)$. That is,
\begin{equation}
    \int_{\Omega(\Lambda)} \sum_{\gamma \in \Gamma} f(\gamma) \dif{\nu(\Gamma)} = \covol_\nu(\Lambda)^{-1} \int_G f(x) \dif{\mu_G}(x). \label{eq:covolume}
\end{equation}
Note that $\covol_\nu(\Lambda)$ depends on both $\nu$ and $\mu_G$.

We mention that invariant measures $\nu$ on $\Omega (\Lambda)$ do not exist in general. However, they do exist for Delone sets $\Lambda \subseteq G$ in amenable groups $G$. Indeed, recall that $G$ is amenable if and only if there exists an invariant probability measure on every compact $G$-space, cf. \cite[Problem 14, p.90]{paterson1988amenability}. In addition, (unique) invariant measures exist on $\Omega(\Lambda)$ whenever $\Lambda$ is a lattice subgroup or a certain model set, cf. \cite[Theorem 3.4]{BjHaPo18}.

The following proposition provides useful bounds for the covolume of Delone sets.

\begin{proposition}\label{prop:covolume-quasi}
Let $\Lambda \subseteq G$ be a Delone set and let $\nu$ be an invariant probability measure on $\Omega(\Lambda)$. Then the following assertions hold:
\begin{enumerate}[(i)]
    \item If $\Lambda$ is $U$-discrete for some nonempty open set $U \subseteq G$, then $\covol_\nu(\Lambda) \geq \mu_G(U)$.
    \item If $\Lambda$ is $K$-dense for some compact set $K \subseteq G$, then $\covol_\nu(\Lambda) \leq \mu_G(K)$.
    \item If $\Lambda$ is a quasi-lattice with a relatively compact Jordan measurable complement $C$, then $\covol_\nu(\Lambda) = \rho_G (C)$.
\end{enumerate}
\end{proposition}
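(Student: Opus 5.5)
The plan is to test the defining identity \eqref{eq:covolume} against suitably smoothed indicator functions. Separation or density of the hull elements, which \Cref{lem:relsep-reldense-passes-to-hull} guarantees, then gives pointwise bounds on the periodization. Part (iii) will follow from (i) and (ii) applied to the interior and the closure of $C^{-1}$.

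For (i), fix $\varphi \in C_c(G)$ with $0 \leq \varphi \leq \mathds{1}_U$ and some $0 \leq g \in C_c(G)$ with $\int g \, d\mu_G > 0$. Define
\[
f(y) = \int_G g(x)\varphi(x^{-1}y) \, d\mu_G(x).
\]
This $f$ is continuous and compactly supported. By \Cref{lem:relsep-reldense-passes-to-hull}, every $\Gamma \in \Omega(\Lambda)$ is $U$-separated, so $\#(\Gamma \cap xU) \leq 1$ for every $x$. Hence $\sum_{\gamma \in \Gamma} \varphi(x^{-1}\gamma) \leq 1$, and by Fubini $(\mathcal{P}f)(\Gamma) \leq \int g \, d\mu_G$. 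On the other hand, left invariance gives $\int_G f \, d\mu_G = \int g \, d\mu_G \cdot \int \varphi \, d\mu_G$. Integrating over the probability measure $\nu$ and using \eqref{eq:covolume} yields
\[
\covol_\nu(\Lambda)^{-1} \int \varphi \, d\mu_G \leq 1 .
\]
Taking the supremum over such $\varphi$, which equals $\mu_G(U)$ by inner regularity on the open set $U$, gives $\covol_\nu(\Lambda) \geq \mu_G(U)$.

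Part (ii) is symmetric. Take $\varphi \in C_c(G)$ with $\varphi \geq \mathds{1}_K$. Every $\Gamma$ in the hull is $K$-dense, so $\sum_\gamma \varphi(x^{-1}\gamma) \geq 1$. The same computation gives $\covol_\nu(\Lambda) \leq \int \varphi \, d\mu_G$. Taking the infimum over such $\varphi$, which equals $\mu_G(K)$ by outer regularity and Urysohn's lemma, proves (ii).

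For (iii), I use the normalization $\rho_G(E) = \mu_G(E^{-1})$; I expect this to be the paper's convention, since the statement is otherwise only correct up to a constant. Let $\overline{C}$ and $C^\circ$ denote the closure and interior of $C$. Since $\Lambda$ is $C^{-1}$-separated and $C^{-1}$-dense, it is $(C^\circ)^{-1}$-separated with $(C^\circ)^{-1}$ open, and $\overline{C}^{-1}$-dense with $\overline{C}^{-1}$ compact because $C$ is relatively compact. Then (i) and (ii) give
\[
\rho_G(C^\circ) \leq \covol_\nu(\Lambda) \leq \rho_G(\overline{C}).
\]
Jordan measurability means $\partial C$ is Haar-null. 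Left and right Haar measures have the same null sets, so $\rho_G(C^\circ) = \rho_G(C) = \rho_G(\overline{C})$, and (iii) follows.

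A side point is that part (i) requires a nonempty open set. Since $G = \bigcup_{\lambda} \lambda C$ with $\Lambda$ countable, $C$ has positive measure. Because $\partial C$ is null, this forces $C^\circ \neq \emptyset$. If $C^\circ$ were empty, the inequality would hold trivially anyway.

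The main obstacle is bookkeeping rather than depth. One has to get the left/right conventions right: separation is defined via left translates $g S$, which is what makes the inversion $C \mapsto C^{-1}$ turn $\mu_G$ into $\rho_G$. One also has to ensure that the smoothed test functions really lie in $C_c(G)$, so that \eqref{eq:covolume} applies.
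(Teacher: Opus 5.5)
Your proof is correct and follows the same route as the paper. Both arguments integrate the periodization over the hull, take pointwise bounds from the fact that separation and density pass to every $\Gamma \in \Omega(\Lambda)$ (\Cref{lem:relsep-reldense-passes-to-hull}), and for (iii) sandwich $C^{-1}$ between its interior and closure using Jordan measurability and $\rho_G(E)=\mu_G(E^{-1})$. The one difference is technical: the paper plugs indicator functions directly into \eqref{eq:covolume}, tacitly extending it beyond $C_c(G)$, while you stay in $C_c(G)$ and recover $\mu_G(U)$ and $\mu_G(K)$ by inner and outer regularity, which is slightly more careful. Your averaging against $g$ is superfluous, though, since taking $x=e$ already gives $\mathcal{P}\varphi \leq 1$ (resp.\ $\mathcal{P}\varphi \geq 1$) on $\Omega(\Lambda)$.
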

\begin{proof}
Applying the defining relation \eqref{eq:covolume} to the indicator function $\mathbbm{1}_S$ of a set $S \subseteq G$ of finite measure, we obtain
\begin{equation}
    \int_{\Omega(\Lambda)} \#( \Gamma \cap S ) \dif{\nu(\Gamma)} = \covol_\nu(\Lambda)^{-1} \mu_G (S). \label{eq:average-counting}
\end{equation}
We use this identity to prove the assertions.
\\~\\
(i) If $\Lambda$ is $U$-discrete, then by \Cref{lem:relsep-reldense-passes-to-hull}, we get that also $\#(\Gamma \cap U ) \leq 1$ for any $\Gamma \in \Omega(\Lambda)$. Hence, using that $\nu$ is a probability measure,  \eqref{eq:average-counting} immediately gives $1 \geq \covol_\nu(\Lambda)^{-1} \mu_G(U)$, as required.
\\~\\
(ii) If $\Lambda$ is $K$-dense, then \Cref{lem:relsep-reldense-passes-to-hull}, implies that also $\#(\Gamma \cap K ) \geq 1$ for any $\Gamma \in \Omega(\Lambda)$. Thus $1 \geq \covol_\nu(\Lambda)^{-1} \mu_G(K)$ follows from \eqref{eq:average-counting}.
\\~\\
(iii) The Jordan measurability of $C$ implies the Jordan measurability of $S = C^{-1}$ since inversion is continuous and the left and right Haar measures on $G$ are equivalent. Denote by $U = S^{\circ}$  and $K = \overline{S}$ the interior and closure of $S$, respectively. Note that $K$ is compact since $C$ is relatively compact. Jordan measurability gives $\mu_G(S) = \mu_G(U) = \mu_G(K)$. Since $U \subseteq S$ (resp.\ $K \supseteq S$) and $\Lambda$ is $S$-separated (resp.\ $S$-dense), it follows that $\Lambda$ must also be $U$-separated (resp.\ $K$-dense). Then (i) and (ii) give that
\[ \mu_G(S) = \mu_G(U) \leq \covol_\nu(\Lambda) \leq \mu_G(K) = \mu_G(S) . \]
Hence, $\covol_\nu(\Lambda) = \mu_G(S) = \mu_G(C^{-1}) = \rho_G (C)$, which settles the claim.
\end{proof}

Assertion (i) and (ii) of the above proposition were proved in \cite[Proposition 2.10]{enstad2025dynamical} using a slightly different but equivalent definition of covolume.

\section{Frame bounds, covolume and the norm of wavelet transform} \label{sec:frame}
In this section, we obtain inequalities relating the frame bounds of a frame, the covolume of its point set and the norm of the associated wavelet transform. 

\subsection{Wavelet transform} \label{sec:coefficient}
Let $(\pi, \Hpi)$ be a projective unitary representation of $G$ on a Hilbert space $\Hpi$, i.e., a strongly measurable map $\pi : G \to \mathcal{U}(\Hpi)$ satisfying $\pi(e_G) = I_{\Hpi}$ and
\[ \pi(g_1) \pi( g_2) = \sigma(g_1, g_2) \pi(g_1 g_2), \quad g_1, g_2 \in G ,\]
for some (measurable) function $\sigma : G \times G \to \mathbb{T}$.
For $\psi, f \in \Hpi$, we define the function
\[ C_\psi f(g) = \langle f, \pi(g) \psi \rangle, \quad g \in G. \]
 By \cite[Lemma 7.1, Theorem 7.5]{varadarajan1985geometry}, the absolute value $|C_{\psi} f| : G \to [0, \infty)$ is continuous for arbitrary $f, \psi \in \Hpi$.
We say that a vector $\psi \in \Hpi$ is \emph{admissible} if the associated \emph{wavelet transform} $C_{\psi} : \Hpi \to L^{\infty} (G)$ given by $f \mapsto C_{\psi} f$ is an isometry into $L^2 (G)$, that is, the orbit $\pi(G) \psi$ is a continuous Parseval frame for $\Hpi$. Throughout, we always consider $L^2 (G)$ with respect to the left Haar measure $\mu_G$.

For a fixed compact unit neighborhood $Q$, we define the associated local maximal function $MF$ of $F \in L^{\infty}_{\loc} (G)$  by
\[
(M F ) (g) = \sup_{q \in Q} |F (g q)|, \quad g \in G,
\]
and set $W(L^2) := \big\{ F \in L^{\infty}_{\loc}(G) : \|M F \|_{L^2} < \infty \big\}$. The space $W(L^2)$ forms a Banach space with respect to the norm $\| F \|_{W(L^2)} := \| M F \|_{L^2}$ and is independent of the choice of the defining neighborhood $Q$, see, e.g., \cite[Theorem 2.6, Corollary 4.2]{rauhut2007wiener}.

We define the space
\[
\mathcal{B}_{\pi} := \bigg\{ \psi \in \Hpi :  C_{\psi} f \in W(L^2), \quad \forall f \in \Hpi \bigg\}.
\]
The following lemma provides a simple sufficient condition for this space to be dense.

\begin{lemma} \label{lem:dense}
    Suppose that $(\pi, \Hpi)$ admits an admissible vector. Then the space $\mathcal{B}_{\pi}$ is norm dense in $\Hpi$.
\end{lemma}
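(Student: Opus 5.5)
Fix an admissible vector $\eta \in \Hpi$. The plan is to build many elements of $\mathcal{B}_{\pi}$ by smoothing $\pi$ over compactly supported test functions. Density then follows from an approximate identity argument. Concretely, for $\varphi \in C_c(G)$ put
\[
\pi(\varphi)\eta := \int_G \varphi(x)\, \pi(x)\eta \dif{\mu_G}(x),
\]
understood as a weak integral. Since $\pi$ is strongly measurable and $\varphi$ is bounded with compact support, this is a well-defined vector in $\Hpi$. We will show that $\pi(\varphi)\eta \in \mathcal{B}_{\pi}$ for every such $\varphi$. We will also show that these vectors are dense.

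\textbf{Membership in $\mathcal{B}_\pi$.} For $f \in \Hpi$ we compute
\[
C_{\pi(\varphi)\eta} f(g) = \int_G \overline{\varphi(x)} \langle f, \pi(g)\pi(x)\eta\rangle \dif{\mu_G}(x) = \int_G \overline{\varphi(x)}\, \overline{\sigma(g,x)}\, C_\eta f(gx) \dif{\mu_G}(x).
\]
Taking absolute values gives $|C_{\pi(\varphi)\eta} f(g)| \le (|C_\eta f| * |\check\varphi|)(g)$, a right convolution of $|C_\eta f| \in L^2(G)$ with a compactly supported bounded kernel. Write $K = \operatorname{supp}\varphi$. Then for $q \in Q$,
\[
|C_{\pi(\varphi)\eta} f(gq)| \le \|\varphi\|_\infty \int_{gQK} |C_\eta f(y)| \dif{\mu_G}(y).
\]
Hence
\[
M C_{\pi(\varphi)\eta} f(g) \le \|\varphi\|_\infty \int_G \mathbbm{1}_{QK}(g^{-1}y)\,|C_\eta f(y)| \dif{\mu_G}(y).
\]
The kernel $(g,y)\mapsto \mathbbm{1}_{QK}(g^{-1}y)$ is supported where $y \in gQK$, equivalently $g \in y(QK)^{-1}$. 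Its row integrals are $\mu_G(QK)$. Its column integrals are $\mu_G(y K^{-1}Q^{-1}) = \mu_G(K^{-1}Q^{-1})$, by left invariance. Schur's test therefore bounds the operator on $L^2(G)$, and we get
\[
\|MC_{\pi(\varphi)\eta}f\|_{L^2} \lesssim \|C_\eta f\|_{L^2} = \|f\| < \infty .
\]
This shows $\pi(\varphi)\eta \in \mathcal{B}_\pi$. In fact it also shows $\mathcal{B}_\pi$ contains the subspace $\pi(C_c(G))\eta$.

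\textbf{Density.} Suppose $h \in \Hpi$ is orthogonal to every $\pi(\varphi)\eta$. Then
\[
\int_G \overline{\varphi(x)}\, \langle h, \pi(x)\eta\rangle \dif{\mu_G}(x) = 0 \quad \text{for all } \varphi \in C_c(G).
\]
So $C_\eta h = 0$ almost everywhere. Because $C_\eta$ is an isometry, $\|h\| = \|C_\eta h\|_{L^2} = 0$, hence $h = 0$. The linear span of $\{\pi(\varphi)\eta\}$ lies in the subspace $\mathcal{B}_\pi$, so $\mathcal{B}_\pi$ is dense.

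\textbf{Main obstacle.} The delicate step is the maximal function estimate, for two reasons. First, we must handle the projective cocycle and the measurability of the weak integral, which is dealt with by the modulus bound above. Second, the convolution must act on the correct side, so that left invariance of $\mu_G$ makes both Schur integrals finite without involving the modular function. If that bookkeeping fails, an alternative is to use the continuity of $|C_\eta f|$ together with a Wiener amalgam convolution relation $L^2 * L^\infty_c \subseteq W(L^2)$, as in \cite{rauhut2007wiener}.
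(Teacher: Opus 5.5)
Your proof is correct and is essentially the paper's argument: your vectors $\pi(\varphi)\eta = C_\eta^{*}\varphi$ are exactly the paper's $C_\eta^{-1}P\varphi$ (since $P = C_\eta C_\eta^{*}$), and your domination $|C_{\pi(\varphi)\eta} f| \leq |C_\eta f| \ast |\varphi^{\vee}|$ is the same bound the paper uses. The differences are cosmetic. You verify $L^2$-boundedness of the maximal convolution by an explicit Schur test, which neatly avoids the modular function, whereas the paper simply notes $M(F^{\vee}) \in C_c(G)$. You also prove density via the orthogonal complement, whereas the paper pushes the dense set $C_c(G)$ through the coisometry $C_\eta^{-1}P$.
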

\begin{proof}
    Let $\eta \in \Hpi$ be an admissible vector, so that $C_{\eta} (\Hpi)$ is a closed subspace of $L^2 (G)$. Consider the left regular $\sigma$-representation $(L^{\sigma}, L^2 (G))$ of $G$, defined by the action $(L^{\sigma} (h) F)(g) = \sigma(h, h^{-1} g) F(h^{-1} g)$ for $g, h \in G$ and $F \in L^2 (G)$. Since the space $\Hpi$ is $\pi$-invariant and 
    $
    C_{\eta} \pi(g) f = L^{\sigma} (g) C_\eta f
    $ for all $g \in G$ and $f \in \Hpi$, it follows that $C_{\eta} (\Hpi)$ is invariant under $L^{\sigma}$. Denote by $P$ the orthogonal projection from $L^2 (G)$ onto $C_{\eta} (\Hpi)$, which commutes with $L^{\sigma}(g)$ for every $g \in G$ as $C_{\eta} (\Hpi)$ is an $L^{\sigma}$-invariant subspace. Let $\psi \in C_{\eta}^{-1} P C_c (G)$, so that $C_{\eta} \psi = P F$ for some $F \in C_c (G)$. Then, given $f \in \Hpi$, a direct calculation shows that
    \begin{align*}
        |(C_{\psi} f )(g)| &= |\langle C_{\eta} f, C_{\eta} \pi(g) \psi \rangle| = |\langle C_{\eta} f, L^{\sigma} (g) C_{\eta}  \psi \rangle| = |\langle C_{\eta} f, P L^{\sigma} (g) F \rangle| \\
        &\leq (|C_{\eta} f| \ast |F^{\vee}|)(g),
    \end{align*}
    where $\ast$ denotes convolution on $G$ and $F^{\vee} (g) = F(g^{-1})$ for $g \in G$. Furthermore, this easily implies that
    $M C_{\psi} f \leq |C_{\eta} f| \ast M(F^{\vee})$. Note that $M(F^{\vee}) \in C_c (G)$, and thus $|C_{\eta} f | \ast M(F^{\vee}) \in L^2 (G)$, which yields that $M C_{\psi} f \in L^2 (G)$. Since $f \in \Hpi$ was arbitrary and $C_{\eta}^{-1} P C_c (G)$ is dense in $\Hpi$, the claim follows.
\end{proof}

Lastly, we rephrase two well-known properties of the space $W(L^2)$ in terms of  $\mathcal{B}_{\pi}$.

\begin{lemma}
Let $\psi \in \Hpi$. Then $\psi \in \mathcal{B}_{\pi}$ if and only if $\pi(\Lambda) \psi$ is a Bessel sequence in $\Hpi$ for every relatively separated set $\Lambda \subseteq G$. 
\end{lemma}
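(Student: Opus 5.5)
We prove both directions by comparing the discrete sum $\sum_{\lambda \in \Lambda} |C_\psi f(\lambda)|^2$ with $\|M C_\psi f\|_{L^2}^2$, using that $|C_\psi f(\lambda)|$ is dominated by the local maximal function on a neighborhood of $\lambda$. The notation $\lambda_G$ in the paper coincides with $\mu_G$, the left Haar measure. Throughout, fix a compact unit neighborhood $Q$ defining $M$. Choose an open unit neighborhood $V$ with $V^{-1}V \subseteq Q$, so in particular $V^{-1} \subseteq Q$ and $V \subseteq Q$.

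\emph{($\Rightarrow$)} Let $\psi \in \mathcal{B}_{\pi}$ and let $\Lambda$ be relatively separated. Since $\Lambda$ is a finite union of separated sets, it suffices to treat a $U$-separated $\Lambda$. Shrinking $V$ if needed, we may assume $V^{-1}V\subseteq U$ up to a translate. Then the sets $\lambda V$, $\lambda \in \Lambda$, have bounded overlap $N := \Rel_{V}(\Lambda)$-type constant. Indeed, by a standard argument $\sum_{\lambda} \mathbbm{1}_{\lambda V} \leq \sup_g \#(\Lambda \cap g V^{-1}) < \infty$, using relative separatedness with respect to the relatively compact open set $V^{-1}$.

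For $g \in \lambda V$, write $\lambda = g v^{-1}$ with $v \in V$. Then $v^{-1} \in V^{-1} \subseteq Q$, hence
\[ |C_\psi f(\lambda)| \leq \sup_{q\in Q} |C_\psi f(gq)| = MC_\psi f(g). \]
Averaging over $\lambda V$ gives
\[ |C_\psi f(\lambda)|^2 \leq \mu_G(V)^{-1}\int_{\lambda V} |MC_\psi f|^2 \dif{\mu_G}, \]
where we used left invariance, $\mu_G(\lambda V)=\mu_G(V)$. Summing over $\lambda$ and using the bounded overlap yields
\[ \sum_{\lambda} |\langle f,\pi(\lambda)\psi\rangle|^2 \leq \frac{N}{\mu_G(V)}\, \|C_\psi f\|_{W(L^2)}^2 . \]
This shows $\sum_\lambda |\langle f,\pi(\lambda)\psi\rangle|^2<\infty$ for every $f$. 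By the uniform boundedness principle, a sequence with this property is automatically Bessel. Concretely, the analysis operator $f\mapsto (\langle f,\pi(\lambda)\psi\rangle)_\lambda$ is everywhere defined into $\ell^2$ and has closed graph, hence is bounded. We note that the projective cocycle $\sigma$ is irrelevant here, since only absolute values of $C_\psi f$ enter.

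\emph{($\Leftarrow$)} Assume $\pi(\Lambda)\psi$ is Bessel for every relatively separated $\Lambda$, and fix $f$. This is the main obstacle, because the supremum in $M$ ranges over a continuum. Our plan is to discretize it using the continuity of $|C_\psi f|$, which holds by the cited result of Varadarajan.

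Choose a relatively separated set $\Lambda_0$ that is $V$-dense, e.g.\ a maximal $V'$-separated set for a smaller neighborhood $V'$. Then $G=\bigcup_{\lambda\in\Lambda_0}\lambda V^{-1}$, which we use by covering each $g$ by some $\lambda V^{-1}$. On $\lambda V^{-1}$ we have
\[ MC_\psi f(g)\le \sup_{h\in \lambda V^{-1}Q}|C_\psi f(h)|. \]
Since $|C_\psi f|$ is continuous and $\overline{\lambda V^{-1}Q}$ is compact, the supremum is attained at some point $h_\lambda \in \lambda \overline{V^{-1}Q}$.

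The set $\Lambda_1=\{h_\lambda\}_{\lambda\in\Lambda_0}$ is relatively separated. Indeed, each $h_\lambda$ lies within the fixed compact set $\overline{V^{-1}Q}$ of $\lambda$, and $\Lambda_0$ is relatively separated, so any compact translate $gQ'$ contains only boundedly many $h_\lambda$. Should several $\lambda$ give the same point, we count it with multiplicity; multiplicity is bounded, so the family is still a finite union of relatively separated families.

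Consequently,
\[ \|MC_\psi f\|_2^2\le \sum_{\lambda\in\Lambda_0}\mu_G(\lambda V^{-1})\,|C_\psi f(h_\lambda)|^2 . \]
Here $\mu_G(\lambda V^{-1})=\mu_G(V^{-1})$ is a constant by left invariance. The right-hand side is finite by the Bessel property of $\pi(\Lambda_1)\psi$. Hence $C_\psi f\in W(L^2)$ for every $f$, i.e.\ $\psi\in\mathcal{B}_\pi$.

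We note that the points $h_\lambda$ depend on $f$. This is harmless, since the hypothesis covers all relatively separated sets and we only need finiteness for each fixed $f$. Measurability of $MC_\psi f$ follows from the continuity of $|C_\psi f|$, since $M C_\psi f$ is then lower semicontinuous.
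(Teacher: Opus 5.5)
Your proof is correct and follows the same route as the paper. The paper applies Lemma 3.8 of Feichtinger--Gr\"ochenig (a continuous function lies in $W(L^2)$ if and only if its samples on every relatively separated set are square-summable) to the continuous function $|C_{\psi} f|$ and then uses the uniform boundedness principle; you prove that sampling lemma by hand, via bounded-overlap averaging over the sets $\lambda V$ and the $f$-dependent maximizers $h_\lambda$, with one harmless superfluous step: the reduction to $U$-separated sets in the forward direction is unnecessary, since $\Rel_{V^{-1}}(\Lambda)<\infty$ already gives the overlap bound for any relatively separated $\Lambda$.
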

\begin{proof}
By \cite[Lemma 3.8]{feichtinger1989banach}, a continuous function $F : G \to \mathbb{C}$ is in $W(L^2)$ if and only if $(F(\lambda) )_{\lambda \in \Lambda} \in \ell^2(\Lambda)$ for every relatively separated set $\Lambda \subseteq G$. In particular, applying this to  the continuous function $|C_{\psi} f|$ for some $f \in \Hpi$, we get that $|C_{\psi} f| \in W(L^2)$ if and only if $(|C_{\psi} f(\lambda))|_{\lambda \in \Lambda} \in \ell^2 (\Lambda)$ for every relatively separated set $\Lambda \subseteq G$. The condition $(|C_{\psi} f(\lambda)|)_{\lambda \in \Lambda} \in \ell^2 (\Lambda)$ for all $f \in \Hpi$ is, by the uniform boundedness theorem, equivalent to $\pi(\Lambda) \psi$ being a Bessel sequence in $\Hpi$. Hence, the lemma follows.
\end{proof}

The following lemma is a simple consequence of \cite[Lemma 1]{Gr08}. 

\begin{lemma} \label{lem:sampling}
Let $\psi \in \mathcal{B}_{\pi}$, let $f \in \Hpi$ and let $Q \subseteq G$ be relatively compact. For every $\varepsilon > 0$ and $R > 0$, there exists a compact set $K \subseteq G$ such that for every relatively separated $\Lambda \subseteq G$ with $\Rel_Q (\Lambda) \leq R$, we have
\[ \sum_{\lambda \in \Lambda \cap K^c} |(C_{\psi} f) (\lambda)|^2  < \varepsilon.
\]
\end{lemma}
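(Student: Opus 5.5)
The plan is to control the tail of the sampled sum by the tail of the local maximal function in $L^2$. First fix a compact unit neighbourhood $Q_0$ and set $F := |C_\psi f|$. Since $\psi \in \mathcal{B}_{\pi}$, the function $F$ lies in $W(L^2)$, so $M_{Q_0}F \in L^2(G)$. Here $M_{Q_0}$ denotes the local maximal function built from $Q_0$; this choice is harmless because $W(L^2)$ does not depend on the defining neighbourhood. The goal is a pointwise-to-sum comparison of the form
\[
\sum_{\lambda \in \Lambda \cap K^c} F(\lambda)^2 \;\le\; C(R)\int_{(K_0)^c} \big(M F\big)(x)^2 \dif{\mu_G}(x),
\]
where $K_0 \subseteq G$ is compact and $K$ is a compact enlargement of $K_0$. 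The constant $C(R)$ should depend only on $R$, $Q$ and the neighbourhood used. Once this is in place, dominated convergence gives $\int_{K_0^c} (MF)^2 \to 0$ along an exhausting sequence of compact sets, and we choose $K_0$ with $C(R)\int_{K_0^c}(MF)^2 < \varepsilon$.

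To obtain the comparison, take a compact unit neighbourhood $V$ small enough that $V^{-1}$ fits inside a translate of $Q$; concretely, $V^{-1} \subseteq q_0^{-1}Q$ for some $q_0$, which is possible when $Q$ has nonempty interior. Then use the maximal function $M$ with respect to $V^{-1}$, which is still an element of $L^2$ by neighbourhood independence. For each $\lambda$ and each $x \in \lambda V$ we have $\lambda = x v^{-1}$ with $v^{-1}\in V^{-1}$, hence $F(\lambda) \le (MF)(x)$. Averaging over $\lambda V$ gives
\[
F(\lambda)^2 \le \mu_G(V)^{-1}\int_{\lambda V} (MF)^2 \dif{\mu_G}.
\]
The sets $\lambda V$ for $\lambda \in \Lambda$ overlap at most $\Rel$-many times: if $x \in \lambda V$, then $\lambda \in xV^{-1} \subseteq x q_0^{-1} Q$, so the number of such $\lambda$ is bounded by $\Rel_Q(\Lambda) \le R$. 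Summing over $\lambda \in \Lambda \cap K^c$ therefore gives
\[
\sum_{\lambda \in \Lambda \cap K^c} F(\lambda)^2 \le \frac{R}{\mu_G(V)} \int_{(\Lambda\cap K^c)V} (MF)^2 \dif{\mu_G}.
\]

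It remains to arrange $(K^c)V \subseteq K_0^c$. Setting $K := K_0 V^{-1}$, which is compact, we get that $\lambda \notin K$ implies $\lambda V \cap K_0 = \emptyset$. This yields the desired uniform bound for all $\Lambda$ with $\Rel_Q(\Lambda)\le R$.

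The main point needing care is that $Q$ is only assumed relatively compact. If $Q$ has empty interior, then $\Rel_Q$ does not control the overlap. In that case I would either interpret $Q$ as a neighbourhood, as in the definition of relative separatedness, or note that the statement is only meaningful for nonempty open $Q$. In the latter case the translate $q_0^{-1}Q$ contains a unit neighbourhood, and $V$ is chosen inside it as above. A further technical step is that measurability of $MF$ follows from continuity of $F$.
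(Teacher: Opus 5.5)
Your proof is correct and follows essentially the paper's route. The paper only says the lemma is a simple consequence of \cite[Lemma 1]{Gr08}, and what you wrote out is exactly the standard argument behind that citation: bound $F(\lambda)^2$ by the average of $(MF)^2$ over $\lambda V$, control overlaps by $\Rel_Q(\Lambda)\le R$, and push the tail outside $K_0$ via $K=K_0V^{-1}$. Your caveat that $Q$ must have nonempty interior is also right, matching the paper's implicit convention that $\Rel_Q$ uses nonempty open relatively compact $Q$.
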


\subsection{Frames}
The following lemma corresponds to \cite[Theorem 3.9]{enstad2025dynamical}. As the paper \cite{enstad2025dynamical} has as a standing assumption that the group $G$ is unimodular, we provide a proof for the sake of completeness.

\begin{lemma}\label{lem:passes-to-hull}
Let $\psi \in \mathcal{H}_\pi$ and let $\Lambda \subseteq G$ be a separated set. Then the following hold:
\begin{enumerate}[(i)]
    \item If $\pi(\Lambda)\psi$ is a Bessel sequence in $\Hpi$, then $\pi(\Gamma)\psi$ is a Bessel sequence (with the same Bessel bound) for every $\Gamma \in \Omega(\Lambda)$.
    \item If $\pi(\Lambda) \psi$ is a frame for $\mathcal{H}$ with $\psi \in \mathcal{B}_{\pi}$, then $\pi( \Gamma) \psi$ is a frame (with the same frame bounds) for every $\Gamma \in \Omega(\Lambda)$.
\end{enumerate}
\end{lemma}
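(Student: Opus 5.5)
Fix $\Gamma \in \Omega(\Lambda)$ and choose $x_n \in G$ with $x_n\Lambda \to \Gamma$ in the Chabauty--Fell topology; since $G$ is second countable, $\mathcal{C}(G)$ is metrizable and a sequence suffices. The first observation is that translation does not change the Bessel or frame bounds. Indeed, $\pi(x_n\lambda)\psi = \overline{\sigma(x_n,\lambda)}\,\pi(x_n)\pi(\lambda)\psi$, so $|\langle f, \pi(x_n\lambda)\psi\rangle| = |\langle \pi(x_n)^* f, \pi(\lambda)\psi\rangle|$. Since $\pi(x_n)^*$ is unitary, $\pi(x_n\Lambda)\psi$ has exactly the same Bessel (resp.\ frame) bounds as $\pi(\Lambda)\psi$. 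By \Cref{lem:relsep-reldense-passes-to-hull}, all $\Lambda_n := x_n\Lambda$ and $\Gamma$ are $U$-separated for one fixed open $U$. It then suffices to pass the inequalities to the limit, using the continuity of $g\mapsto |C_\psi f(g)|$.

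For (i), fix $f$ and a compact $K$ with $\Gamma\cap\partial K=\emptyset$. Such $K$ can be chosen to exhaust $G$: $\Gamma$ is countable, so one can take closed balls with respect to a proper compatible metric of radii avoiding countably many values. Apply \Cref{lem:convergence_sets} to get, for large $n$, points $\mu_n^{(j)}\in\Lambda_n\cap K$ with $\mu_n^{(j)}\to\gamma^{(j)}$. Continuity of $|C_\psi f|$ then gives
\[
\sum_{\gamma\in\Gamma\cap K}|C_\psi f(\gamma)|^2=\lim_n \sum_{\mu\in\Lambda_n\cap K}|C_\psi f(\mu)|^2\le B\|f\|^2 .
\]
Letting $K\uparrow G$ yields the Bessel bound $B$ for $\pi(\Gamma)\psi$.

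For (ii), the upper bound follows from (i). For the lower bound, fix $f$ and $\varepsilon>0$. Since all $\Lambda_n$ are $U$-separated, $\Rel_Q(\Lambda_n)\le R$ uniformly for a fixed relatively compact $Q$ and some $R$. \Cref{lem:sampling}, which uses $\psi\in\mathcal{B}_\pi$, then gives a compact $K_0$ with $\sum_{\lambda\in\Lambda_n\setminus K_0}|C_\psi f(\lambda)|^2<\varepsilon$ for all $n$. Enlarge $K_0$ to a compact $K$ with $\Gamma\cap\partial K=\emptyset$; the tail estimate persists. Then
\[
A\|f\|^2\le \sum_{\lambda\in\Lambda_n}|C_\psi f(\lambda)|^2\le \sum_{\mu\in\Lambda_n\cap K}|C_\psi f(\mu)|^2+\varepsilon ,
\]
and by \Cref{lem:convergence_sets} together with continuity, the right-hand side tends to $\sum_{\gamma\in\Gamma\cap K}|C_\psi f(\gamma)|^2+\varepsilon\le\sum_{\gamma\in\Gamma}|C_\psi f(\gamma)|^2+\varepsilon$. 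Letting $\varepsilon\to0$ gives the lower bound $A$.

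The main obstacle is this lower bound. Mass of $|C_\psi f|^2$ on $\Lambda_n$ could escape to infinity and be lost in the limit. The uniform tail control of \Cref{lem:sampling} rules this out, and it is exactly where $\psi\in\mathcal{B}_\pi$ is needed. A minor technical point is choosing $K$ with $\Gamma\cap\partial K=\emptyset$ while containing a prescribed compact set. This can be done by taking $K=\overline{V}$ for a relatively compact open $V\supseteq K_0$ and perturbing its boundary, using that $\Gamma$ is discrete, hence locally finite.
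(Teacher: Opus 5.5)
Your proof is correct and follows the paper's argument. Translates of $\Lambda$ keep the same bounds. \Cref{lem:convergence_sets} together with continuity of $|C_\psi f|$ transfers the finite sums over $\Lambda_n\cap K$ to $\Gamma\cap K$ (for compact $K$ with $\Gamma\cap\partial K=\emptyset$), and the uniform tail estimate of \Cref{lem:sampling} secures the lower bound; that estimate is the only place $\psi\in\mathcal{B}_\pi$ is used.

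The differences are minor. You justify the translation invariance of the bounds via the cocycle, and you construct the compact sets with $\Gamma\cap\partial K=\emptyset$ explicitly. For the lower bound you only need the tail estimate on the $\Lambda_n$ and the trivial inequality $\sum_{\gamma\in\Gamma\cap K}|C_\psi f(\gamma)|^2\le\sum_{\gamma\in\Gamma}|C_\psi f(\gamma)|^2$, whereas the paper proves the full convergence of $\sum_{\Lambda_n}$ to $\sum_{\Gamma}$.
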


\begin{proof}
Let $\varepsilon > 0$, $f \in \Hpi$ and let $U \subseteq G$ be an open, relatively compact set such that $\Lambda$ is $U$-separated.
Let $\Gamma \in \Omega(\Lambda)$ and let $(\Lambda_n)_{n \in \mathbb{N}}$ be a sequence of left translates $\Lambda_n \subseteq G$ of $\Lambda$ such that $\Gamma = \lim_{n \to \infty} \Lambda_n$. Clearly, each set $\Lambda_n$, $n \in \mathbb{N}$, is $U$-separated and, by \Cref{lem:relsep-reldense-passes-to-hull}, also the set  $\Gamma$ is $U$-separated. 
\\~\\
(i) Let $\pi(\Lambda)\psi$ be a Bessel sequence with Bessel bound $C_2$. Then $\pi(\Lambda_n)\psi$ is a Bessel sequence with Bessel bound $C_2$ for all $n \in \N$. Let $K \subseteq G$ be a compact set. Enlarging $K$, we may assume that $\Gamma \cap \partial K = \emptyset$. Write $\Gamma \cap K= \{ \gamma^{(1)}, ..., \gamma^{(k)} \} $. Then, by \Cref{lem:convergence_sets}, there exists $n_0 \in \mathbb{N}$ such that $\Lambda_n \cap K = \{\lambda_n^{(1)}, ..., \lambda_n^{(k)} \}$ for all $n \geq n_0$ and $\lambda_n^{(j)} \to \gamma^{(j)}$ for all $1 \leq j \leq k$ as $n \to \infty$. This, combined with the continuity of the function $|C_{\psi} f|$ on $G$, yields that there exists $n_1 \geq n_0$ such that
\begin{align} \label{eq:passes-to-hull_2}
\bigg| \sum_{\gamma \in \Gamma \cap K} |(C_{\psi} f)(\gamma) |^2 - \sum_{\mu \in \Lambda_n \cap K} |(C_{\psi} f)(\mu) |^2 \bigg| \leq \frac{\varepsilon}{2}, \qquad n \geq n_1 .
\end{align}
Thus, since $\varepsilon > 0$ is arbitrary, we conclude that
\[ \sum_{\gamma \in \Gamma \cap K} | (C_\psi f)(\gamma)|^2 \leq \sum_{\mu \in \Lambda_n \cap K} |(C_\psi f)(\mu)|^2 \leq C_2 \| f \|^2 . \]
Since $f \in \Hpi$ and this holds for arbitrarily large compact sets $K \subseteq G$, we deduce that $\pi(\Gamma)\psi$ is a Bessel sequence with Bessel bound $C_2$.
\\~\\
(ii) Suppose $\pi(\Lambda)\psi$ is a frame and $\psi \in \mathcal{B}_{\pi}$. By \Cref{lem:sampling} there exists a compact set $K \subseteq G$ such that
\[
\sum_{\gamma \in \Gamma \cap K^c} |(C_{\psi} f)(\gamma) |^2 < \frac{\varepsilon}{4} \quad \text{and} \quad \sum_{\mu \in \Lambda_n \cap K^c} |(C_{\psi} f)(\mu) |^2 < \frac{\varepsilon}{4}
\]
for all $n \in \mathbb{N}$. Therefore,
\begin{align} \label{eq:passes-to-hull_1}
\bigg| \sum_{\gamma \in \Gamma \cap K^c} |(C_{\psi} f)(\gamma) |^2 - \sum_{\mu \in \Lambda_n \cap K^c} |(C_{\psi} f)(\mu) |^2 \bigg| \leq \frac{\varepsilon}{2}.
\end{align}
By enlarging $K$ if necessary, we may assume that \eqref{eq:passes-to-hull_1} holds for some compact $K \subseteq G$ with $\Gamma \cap \partial K = \emptyset$, see, e.g., the proof of \cite[Theorem 3.9]{enstad2025dynamical}. 

A combination of \eqref{eq:passes-to-hull_2} and \eqref{eq:passes-to-hull_1}  (with the same compact set $K \subseteq G$), we find $n_2 \in \N$ such that
\[
\bigg| \sum_{\gamma \in \Gamma} |(C_{\psi} f)(\gamma) |^2 - \sum_{\mu \in \Lambda_n \cap K^c} |(C_{\psi} f)(\mu) |^2 \bigg| \leq \varepsilon
\]
for all $n \geq n_2$. This easily implies the the desired conclusion.
\end{proof}

The following result is the main result of this section. 

\begin{theorem}\label{thm:frame-bounds-covolume}
Let $\Lambda \subseteq G$ be a Delone set and let $\nu$ be an invariant measure on $\Omega(\Lambda)$.
If $\pi(\Lambda) \psi$ is a Bessel sequence in $\Hpi$ for some $\psi \in \Hpi$ with Bessel bound $C_2 > 0$, then 
\[ \frac{ \| C_{\psi} f \|^2}{ \| f \|^2 \covol_\nu(\Lambda)} \leq C_2 \quad \text{for all} \quad f \in \Hpi . \]
In addition, if $\psi \in \mathcal{B}_{\pi}$ and $\pi(\Lambda) \psi$ also admits a lower frame bound $C_1 > 0$, then 
\[ \frac{ \| C_{\psi} f \|^2}{ \| f \|^2 \covol_\nu(\Lambda)} \geq C_1 \quad \text{for all} \quad f \in \Hpi . \]
\end{theorem}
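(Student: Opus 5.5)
The plan is to periodize $|C_\psi f|^2$ over the hull and apply the frame inequalities pointwise on $\Omega(\Lambda)$, as the introduction indicates. Fix $f \in \Hpi$ and set $F := |C_\psi f|^2$, which is continuous and nonnegative on $G$. The defining relation \eqref{eq:covolume} is stated for $f \in C_c(G)$. I would first extend it to nonnegative lower semicontinuous (or nonnegative measurable) functions by monotone convergence. Concretely, choose $0 \le F_m \in C_c(G)$ with $F_m \uparrow F$ pointwise, which is possible since $G$ is second countable and $F$ is continuous. The periodizations $\mathcal{P}F_m(\Gamma) = \sum_{\gamma\in\Gamma} F_m(\gamma)$ then increase to $\sum_{\gamma \in \Gamma} F(\gamma)$ for every $\Gamma$. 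Monotone convergence on both sides of \eqref{eq:covolume} gives
\[
\int_{\Omega(\Lambda)} \sum_{\gamma\in\Gamma} |C_\psi f(\gamma)|^2 \dif{\nu(\Gamma)} = \covol_\nu(\Lambda)^{-1} \| C_\psi f\|_{L^2(G)}^2 ,
\]
with both sides possibly infinite a priori. Measurability of $\Gamma \mapsto \sum_{\gamma\in\Gamma}F(\gamma)$ follows because it is a supremum of the continuous functions $\mathcal{P}F_m$.

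For the upper bound, I would apply \Cref{lem:passes-to-hull}(i). Since $\Lambda$ is Delone, it is in particular separated. Hence for every $\Gamma \in \Omega(\Lambda)$ the system $\pi(\Gamma)\psi$ is Bessel with bound $C_2$, so the integrand is at most $C_2\|f\|^2$ pointwise. Since $\nu$ is a probability measure, integrating gives $\covol_\nu(\Lambda)^{-1}\|C_\psi f\|^2 \le C_2 \|f\|^2$, which is the first claim. It also shows that $C_\psi f \in L^2(G)$.

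For the lower bound, assume in addition $\psi \in \mathcal{B}_\pi$ and that $\pi(\Lambda)\psi$ is a frame with bounds $C_1, C_2$. Then \Cref{lem:passes-to-hull}(ii) shows that every $\pi(\Gamma)\psi$, for $\Gamma \in \Omega(\Lambda)$, is a frame with the same lower bound $C_1$. So the integrand is at least $C_1\|f\|^2$ everywhere, and integrating against the probability measure $\nu$ gives $\covol_\nu(\Lambda)^{-1}\|C_\psi f\|^2 \ge C_1\|f\|^2$.

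The main obstacle is the hull-passage step, which the previous lemma already handles. The lower frame bound can be lost in a Chabauty--Fell limit because mass of $C_\psi f$ can escape to infinity, and this is exactly why the $\mathcal{B}_\pi$ assumption and the uniform tail estimate of \Cref{lem:sampling} are needed. What remains is the extension of \eqref{eq:covolume} from $C_c(G)$ to $|C_\psi f|^2$. The monotone approximation handles it, and no integrability is needed beforehand since everything is nonnegative.
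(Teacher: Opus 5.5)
Your proposal is correct and follows the paper's proof: periodize $|C_\psi f|^2$ via \eqref{eq:covolume}, bound the integrand pointwise on $\Omega(\Lambda)$ using \Cref{lem:passes-to-hull}(i) and (ii), and integrate against the probability measure $\nu$. Your monotone-convergence extension of \eqref{eq:covolume} from $C_c(G)$ to the continuous nonnegative function $|C_\psi f|^2$ justifies a step the paper uses without comment in \eqref{eq:covol_identity}, so it is a useful addition and not a change of route.
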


\begin{proof} 
Let $f \in \Hpi$. 
By part (i) of \Cref{lem:passes-to-hull}, we have that
\[  \sum_{\gamma \in \Gamma} | \langle f, \pi(\gamma) \psi \rangle |^2 \leq C_2 \| f \|^2 \]
for any $\Gamma \in \Omega(\Lambda)$.
Integrating the term on the left-hand side with respect to $\nu$, we obtain
\begin{align*} \int_{\Omega(\Lambda)} \sum_{\gamma \in \Gamma} | \langle f, \pi(g) \psi \rangle |^2 \; \dif{\nu(\Gamma)} &= \covol_\nu(\Lambda)^{-1} \int_G |C_{\psi} f(g)|^2 \; d\mu_G (g) \\
&= \covol_\nu(\Lambda)^{-1} \| C_{\psi} f \|^2.  \numberthis \label{eq:covol_identity}
\end{align*}
Since $\nu$ is a probability measure, this gives
\[ \covol_\nu(\Lambda)^{-1} \| C_{\psi} f \|^2 \leq C_2 \| f \|^2, \]
from which we immediately can draw the first conclusion.

In addition, if $\pi(\Lambda) \psi$ with $\psi \in \mathcal{B}_{\pi}$ admits a lower frame bound $C_1 > 0$, then 
\[  \sum_{\gamma \in \Gamma} | \langle f, \pi(\gamma) \psi \rangle |^2 \geq C_1 \| f \|^2 \]
for any $\Gamma \in \Omega(\Lambda)$, by part (ii) of \Cref{lem:passes-to-hull}. Combined with the identity \eqref{eq:covol_identity}, this easily completes the proof.
\end{proof}

Lastly, we obtain the following consequence of \Cref{thm:frame-bounds-covolume} for amenable groups. 

\begin{corollary} \label{cor:quasi-lattice}
Let $G$ be amenable and let $U \subseteq G$  and $K \subseteq G$ be a nonempty open and compact set, respectively. Let $\Lambda \subseteq G$ be a $U$-separated, $K$-dense set and $\psi \in \Hpi$.

If $\pi(\Lambda)\psi$ is a Bessel sequence in $\Hpi$ with Bessel bound $C_2 > 0$, then
\[ \| C_{\psi} f \|^2 \leq C_2 \mu_G (K) \| f \|^2 \quad \text{for all} \quad f \in \Hpi. \]
 In addition, if $\psi \in \mathcal{B}_{\pi}$ and $\pi(\Lambda) \psi$ also admits a lower frame bound $C_1 > 0$, then
\[ \| C_{\psi} f \|^2 \geq C_1 \mu_G (U) \| f \|^2 \quad \text{for all} \quad f \in \Hpi. \]
In particular, if $\pi(\Lambda) \psi$ is frame with $\psi \in \mathcal{B}_{\pi}$ for some quasi-lattice $\Lambda \subseteq G$ with a relatively compact Jordan measurable complement $C$, then 
\[
C_1 \| f \|^2 \leq \frac{\|C_{\psi} f\|^2}{\rho_G (C)} \leq C_2 \| f \|^2
\]
for all $f \in \Hpi$.
\end{corollary}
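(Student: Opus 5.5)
The corollary should follow by combining \Cref{thm:frame-bounds-covolume} with the covolume bounds in \Cref{prop:covolume-quasi}, once we know that an invariant probability measure exists on the hull. The plan has three steps.

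\textbf{Step 1: existence of $\nu$.} Since $\Lambda$ is $U$-separated and $K$-dense, it is a Delone set. The hull $\Omega(\Lambda)$ is a closed subset of the compact space $\mathcal{C}(G)$, so it is compact. It is also $G$-invariant, because it is the closure of an orbit. Thus $\Omega(\Lambda)$ is a compact $G$-space. Since $G$ is amenable, the characterization cited in \Cref{sec:covolume} gives an invariant Radon probability measure $\nu$ on $\Omega(\Lambda)$. We fix this $\nu$ for the rest of the argument.

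\textbf{Step 2: the two inequalities.} For the upper bound, the first part of \Cref{thm:frame-bounds-covolume} gives $\|C_\psi f\|^2 \le C_2 \covol_\nu(\Lambda)\|f\|^2$. By \Cref{prop:covolume-quasi}(ii), $K$-density gives $\covol_\nu(\Lambda) \le \mu_G(K)$, which yields the first claim. For the lower bound, assume $\psi \in \mathcal{B}_\pi$ and that a lower frame bound $C_1$ exists. The second part of the theorem gives $\|C_\psi f\|^2 \ge C_1 \covol_\nu(\Lambda)\|f\|^2$. By \Cref{prop:covolume-quasi}(i), $U$-separation gives $\covol_\nu(\Lambda)\ge \mu_G(U)$, which yields the second claim.

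\textbf{Step 3: the quasi-lattice case.} Let $\Lambda$ be a quasi-lattice whose complement $C$ is relatively compact and Jordan measurable. As in the proof of \Cref{prop:covolume-quasi}(iii), $\Lambda$ is then $U$-separated with $U=(C^{-1})^\circ$ and $K$-dense with $K=\overline{C^{-1}}$. Here $K$ is compact, so $\Lambda$ is Delone provided $U$ is nonempty, which holds when $\mu_G(C)>0$.

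With $\nu$ from Step 1, \Cref{prop:covolume-quasi}(iii) gives $\covol_\nu(\Lambda)=\rho_G(C)$. Substituting this into both inequalities of \Cref{thm:frame-bounds-covolume} gives the displayed two-sided estimate. Alternatively, one can use $\mu_G(U)=\mu_G(K)=\mu_G(C^{-1})=\rho_G(C)$ together with Step 2.

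\textbf{Main obstacle.} The only delicate point is justifying that $\Omega(\Lambda)$ really is a compact $G$-space, i.e.\ that the action on $\mathcal{C}(G)$ is continuous and preserves the hull, so that amenability applies. The paper already states that the action on $\mathcal{C}(G)$ is continuous, and invariance of the orbit closure is immediate. Beyond that, the argument is a direct substitution.
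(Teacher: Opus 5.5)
Your proposal is correct and follows the paper's proof: obtain an invariant probability measure $\nu$ on the compact $G$-space $\Omega(\Lambda)$ from amenability, then combine \Cref{thm:frame-bounds-covolume} with the covolume bounds of \Cref{prop:covolume-quasi}(i)--(iii). Your remark that the quasi-lattice case needs $(C^{-1})^\circ \neq \emptyset$, so that $\Lambda$ is Delone, is a point the paper leaves implicit, and you left $\mu_G(C)>0$ unjustified; it does hold here, because the Bessel property with $\psi \neq 0$ forces $\Lambda$ to be relatively separated, hence countable, and therefore $\mu_G(C)>0$.
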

\begin{proof}
Since $G$ is amenable, the compact $G$-space $\Omega(\Lambda)$  admits an invariant probability measure $\nu$. Applying \Cref{thm:frame-bounds-covolume}, we obtain
\[  \covol_\nu(\Lambda)^{-1} \| C_{\psi} f \|_2^2 \leq C_2 \| f \|^2 \]
for $f \in \Hpi$.
By \Cref{prop:covolume-quasi}, we have that $\covol_\mu(\Lambda) \leq \mu_G(K)$, from which the first claim follows. The additional claims are shown similarly.
\end{proof}

\begin{remark} \label{rem:frame_bounds}
 \Cref{cor:quasi-lattice} implies, in particular, that if $\pi(\Lambda) \psi$ is a Bessel sequence for some relatively dense set $\Lambda \subseteq G$, then $\pi$ must be square-integrable, in the sense that $C_{\psi} f \in L^2 (G)$ for all $f \in \Hpi$. 
For certain representations of semidirect product groups $G = \mathbb{R}^d \rtimes H$ of $\mathbb{R}^d$ and a matrix group $H \leq \mathrm{GL}(d, \mathbb{R})$, this was shown in \cite{aniello2001discrete}. For that setting, the upper bound provided by \Cref{cor:quasi-lattice} corresponds to \cite[Proposition 1]{aniello2001discrete}. Although the corresponding lower bound is asserted as \cite[Proposition 2]{aniello2001discrete} (even without any additional assumption on $\psi$), the argument provided for the lower bound in \cite{aniello2001discrete} (cf. \cite[Theorem 1]{aniello2001discrete}) is incomplete.
\end{remark}

\section{Calder\'on's condition for arbitrary dilations} \label{sec:calderon}
Throughout this section, we let $A \in \mathrm{GL}(d, \mathbb{R})$ and consider the semidirect product group $G = \mathbb{R}^d \rtimes \langle A \rangle$
of $\mathbb{R}^d$ and the cyclic group $\langle A \rangle := \{A^j : j \in \mathbb{Z}\}$, with group law
\[
(x, A^j) (y, A^k) = (x+A^j y, A^{j+k} ).
\]
Since $G$ is the semidirect product of two abelian (hence, amenable) locally compact groups, it is itself an amenable locally compact group, see, e.g., \cite[Proposition 0.15]{paterson1988amenability} and \cite[Problem 5, p.47]{paterson1988amenability}. 
The left and right Haar measure of $G$ are given by the measures $d\mu_G (x, A^j) = |\det(A)|^{-j} dx \mu_c(j)$ and $d\rho_G (x, A^{j}) = dx d\mu_c(j)$, respectively, where $dx$ denotes integration against Lebesgue measure and $\mu_c$ denotes counting measure. 

The following lemma provides a class of quasi-lattices in $G$.

\begin{lemma} \label{lem:quasilattice_affine}
For $A, P \in \mathrm{GL}(d, \mathbb{R})$, the set
\[ \Lambda := \big\{ (A^j P k, A^j) : j \in \mathbb{Z}, k \in \mathbb{Z}^d \big \}  \]
is a quasi-lattice in $G$ with Jordan measurable complement given by $C = P[0,1)^d \times \{ I_d \}$.
\end{lemma}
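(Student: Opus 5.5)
We verify directly that every $g = (x, A^m) \in G$ can be written uniquely as $g = \lambda c$ with $\lambda \in \Lambda$ and $c \in C$. Jordan measurability and relative compactness of $C$ then follow from properties of the unit cube.

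\emph{Product formula.} By the group law, for $\lambda = (A^j P k, A^j)$ and $c = (P t, I_d)$ with $t \in [0,1)^d$,
\[
\lambda c = (A^j P k + A^j P t, A^j) = (A^j P(k + t), A^j).
\]

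\emph{Existence and uniqueness.} Given $g = (x, A^m)$, comparing second coordinates forces $j = m$. It then remains to solve $P(k+t) = A^{-m} x$, that is, $k + t = P^{-1} A^{-m} x$. Since $\mathbb{R}^d = \bigcupdot_{k \in \mathbb{Z}^d} (k + [0,1)^d)$ is a disjoint union, there is exactly one pair $(k, t) \in \mathbb{Z}^d \times [0,1)^d$ with this property: $k$ is the coordinatewise floor and $t$ the fractional part. Hence $G = \bigcupdot_{\lambda \in \Lambda} \lambda C$ with disjoint pieces, and $\Lambda$ is a quasi-lattice with complement $C$. Disjointness can also be read off directly: $\lambda C \cap \lambda' C \neq \emptyset$ forces equal $j$ and then equal $k$ by uniqueness of the floor decomposition.

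\emph{Topological properties of $C$.} Since $\langle A \rangle$ carries the discrete topology, $\{I_d\}$ is open and closed in it. Hence $\mathbb{R}^d \times \{I_d\}$ is a clopen copy of $\mathbb{R}^d$ inside $G$, on which $\mu_G$ restricts to Lebesgue measure ($j = 0$). It follows that $\overline{C} = P[0,1]^d \times \{I_d\}$ is compact, so $C$ is relatively compact. Moreover, $\partial C = \partial(P[0,1)^d) \times \{I_d\}$. This is the image under the linear map $P$ of the boundary of the cube, a finite union of faces, which is Lebesgue null. Therefore $\mu_G(\partial C) = 0$.

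No step is a genuine obstacle. The only care needed is to keep the order of multiplication correct ($\lambda c$ rather than $c \lambda$, matching the definition $G = \bigcupdot \lambda C$) and to note that the discreteness of $\langle A \rangle$ makes the boundary computation reduce to one in $\mathbb{R}^d$.
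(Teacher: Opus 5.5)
Your proof is correct and is essentially the paper's argument: the paper likewise proves disjointness of the translates $\lambda C$ and covering of $G$ via the unique decomposition $P^{-1}A^{-j}x = k + t$ with $k \in \mathbb{Z}^d$, $t \in [0,1)^d$. Your verification that $\overline{C}$ is compact and $\mu_G(\partial C) = 0$ is a welcome addition, since the paper states these properties (which \Cref{prop:covolume-quasi}(iii) requires) without checking them.
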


\begin{proof}
First, let $(A^j P k, A^j), (A^{j'} P k', A^{j'}) \in \Lambda$ be such that \[
 (A^j P k, A^j) C \cap (A^{j'} P k', A^{j'}) C \neq \emptyset. \] Then there exists $t,t' \in [0,1)^d$ such that $(A^j Pk, A^j)(Pt,I_d) = (A^{j'} P k', A^{j'})(Pt',I_d)$. This means that $A^j P k + A^jP t = A^{j'} P k' + A^{j'}Pt'$ and $A^j = A^{j'}$. The last forces $j=j'$, so the former gives $k+t=k'+t'$. Since $t,t' \in [0,1)^d$, we conclude that $t = t'$ and $k=k'$.

Second, let $(x,A^j) \in G$ be arbitrary. Choose $k \in \Z^d$ and $t \in [0,1)^d$  such that $P^{-1}A^{-j}x = k+t$. Then $(x,A^j) = (A^jP(k+t),A^j) = (A^jPk,A^j)(Pt,I_d)$. 

The above two properties show that $G = \bigcupdot_{\lambda \in \Lambda} \lambda Q$.
\end{proof}

We next consider the \emph{quasi-regular representation} of $G = \mathbb{R}^d \rtimes \langle A \rangle$ on $L^2 (\mathbb{R}^d)$, defined by
\[
\pi(x, A^j) f(t) = |\det(A)|^{-j/2} f(A^{-j} (t - x)), \quad t \in \mathbb{R}^d.
\]
It is readily verified that this action defines a unitary representation of $G$ on $L^2 (\mathbb{R}^d)$. By \cite[Section 3]{laugesen2002characterization} or \cite[Theorem 2]{larson2006explicit}, the representation $\pi$ admits an admissible vector whenever $|\det(A)| \neq 1$. In this case, the space $\mathcal{B}_{\pi}$ is norm dense in $L^2 (\mathbb{R}^d)$ by \Cref{lem:dense}.

We next prove the following more general version of \Cref{thm:calderon_intro}.

\begin{theorem} 
Let $A, P \in \mathrm{GL}(d, \mathbb{R})$.
If $\psi \in \mathcal{B}_{\pi}$ and
$
 \{\pi(A^j P k, A^j) \psi \}_{ j \in \mathbb{Z}, k \in \mathbb{Z}^d } 
$
is a frame for $L^2 (\mathbb{R}^d)$ with frame bounds $0<C_1\leq C_2 < \infty$, then $\{ \pi(x, A^j) \psi \}_{x \in \mathbb{R}^d, j \in \mathbb{Z}}$ is a continuous frame for $L^2 (\mathbb{R}^d)$ with frame bounds $C_1|\det(P)|$ and $ C_2|\det(P)|$, and 
\[
C_1  \leq \frac{1}{|\det(P)|}  \sum_{j \in \mathbb{Z}} |\widehat{\psi} ((A^t)^j \xi)|^2 \leq C_2  \quad \text{for a.e.} \;\; \xi \in \mathbb{R}^d.
\]
\end{theorem}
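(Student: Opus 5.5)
The plan is to apply \Cref{cor:quasi-lattice} to the quasi-lattice of \Cref{lem:quasilattice_affine}, and then identify $\|C_\psi f\|^2$ via Plancherel. First, $G = \mathbb{R}^d \rtimes \langle A \rangle$ is amenable, as noted above. By \Cref{lem:quasilattice_affine}, $\Lambda = \{(A^jPk, A^j)\}$ is a quasi-lattice with complement $C = P[0,1)^d \times \{I_d\}$. This set is relatively compact and Jordan measurable: its boundary is $\partial(P[0,1)^d)\times\{I_d\}$, which is Lebesgue-null, and $\{I_d\}$ is open in the discrete factor. Its right Haar measure is $\rho_G(C) = |P[0,1)^d| = |\det(P)|$, since $d\rho_G = dx\, d\mu_c(j)$.

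The system in the statement is exactly $\pi(\Lambda)\psi$, and by hypothesis $\psi \in \mathcal{B}_\pi$. The final part of \Cref{cor:quasi-lattice} therefore gives
\[
C_1 |\det(P)| \, \|f\|^2 \leq \|C_\psi f\|_{L^2(G)}^2 \leq C_2 |\det(P)| \, \|f\|^2 .
\]
Here $L^2(G)$ is taken with respect to the left Haar measure $d\mu_G = |\det A|^{-j}\,dx\,d\mu_c(j)$. This is precisely the statement that $\{\pi(x,A^j)\psi\}$ is a continuous frame with bounds $C_1|\det P|$ and $C_2|\det P|$, where the continuous frame is indexed by $G$ with its left Haar measure.

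Next I compute $\|C_\psi f\|^2$ on the Fourier side. For fixed $j$, $C_\psi f(x,A^j) = (f * \widetilde{\psi_j})(x)$ with $\psi_j = |\det A|^{-j/2}\psi(A^{-j}\cdot)$. Since $\widehat{\psi_j}(\xi) = |\det A|^{j/2}\widehat\psi((A^t)^j\xi)$, Plancherel in $x$ gives
\[
\int_{\mathbb{R}^d} |C_\psi f(x,A^j)|^2 \, dx = |\det A|^{j} \int_{\mathbb{R}^d} |\widehat f(\xi)|^2 \, |\widehat\psi((A^t)^j\xi)|^2 \, d\xi .
\]
The factor $|\det A|^j$ cancels the density $|\det A|^{-j}$ of $\mu_G$. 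Summing over $j$ with Tonelli then yields
\[
\|C_\psi f\|^2 = \int_{\mathbb{R}^d} |\widehat f(\xi)|^2 \, \Delta(\xi) \, d\xi, \qquad \Delta(\xi) := \sum_{j\in\mathbb{Z}} |\widehat\psi((A^t)^j\xi)|^2 .
\]
Checking the normalizations in this step, including the Fourier convention, is the only computation to do carefully.

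Finally, I convert the norm inequality into pointwise bounds on $\Delta$. The inequality $C_1|\det P|\int|\widehat f|^2 \leq \int |\widehat f|^2 \Delta \leq C_2 |\det P| \int |\widehat f|^2$ holds for all $\widehat f \in L^2(\mathbb{R}^d)$. Taking $\widehat f = \mathbbm{1}_E$ for an arbitrary bounded measurable $E$ shows that the average of $\Delta$ over $E$ lies in $[C_1|\det P|, C_2|\det P|]$. For instance, if the set where $\Delta > C_2|\det P|$ had positive measure, a bounded subset of it with positive measure would contradict the upper bound. The a.e.\ bounds on $\Delta$ then follow, which gives the claimed inequality after dividing by $|\det P|$. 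All the genuine difficulty, namely the lower bound passing to the hull, has already been absorbed into \Cref{cor:quasi-lattice}, so I expect no real obstacle beyond bookkeeping of Haar measures.
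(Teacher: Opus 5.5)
Your proposal is correct and follows the paper's proof essentially step for step. You apply \Cref{lem:quasilattice_affine} and the quasi-lattice part of \Cref{cor:quasi-lattice} with $\rho_G(C)=|\det(P)|$, then use Plancherel to identify $\|C_\psi f\|^2=\int_{\mathbb{R}^d}|\widehat f(\xi)|^2\sum_{j}|\widehat\psi((A^t)^j\xi)|^2\,d\xi$. Writing this computation via convolution rather than via inner products on the Fourier side is only cosmetic, and your indicator-function argument spells out the final a.e.\ step that the paper leaves implicit.
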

\begin{proof}
By \Cref{lem:quasilattice_affine}, the set $\Lambda = \{ (A^j Pk, A^j) : j \in \mathbb{Z}, k \in \mathbb{Z}^d \}$ is a quasi-lattice in $G$ with complement $C = P [0, 1)^d \times \{I_d\}$, so that $\rho_G (C) = |\det(P)|$. 
Hence, if the system $\pi(\Lambda) \psi$
is a frame for $L^2 (\mathbb{R}^d)$ with $\psi \in \mathcal{B}_{\pi}$, then \Cref{cor:quasi-lattice} implies that
\begin{align*} \label{eq:multiple_isometry}
C_1 |\det (P)| \leq \| C_{\psi} f \|^2  \leq C_2 |\det (P)| \| f \|^2
\end{align*}
for all $f \in L^2 (\mathbb{R}^d)$, which shows that 
$\{ \pi(x, A^j) \psi \}_{x \in \mathbb{R}^d, j \in \mathbb{Z}}$ is a frame with the claimed bounds.
A standard calculation (see, e.g., \cite{laugesen2002characterization, fuehr2002continuous}) next shows that
\begin{align*}
\| C_{\psi} f \|^2 &= \int_{G} | \langle \widehat{f}, \widehat{\pi(x, A^j) \psi} \rangle |^2 \; d\mu_G(x,A^j) \\
 &= \int_G \bigg| \int_{\mathbb{R}^d} \widehat{f}(\xi) |\det (A) |^{j/2} e^{- 2\pi i \xi \cdot x} \overline{\widehat{\psi}((A^t)^{j} \xi)} \; d\xi \bigg|^2 \; d\mu_G(x, A^j)  \\
 &= \sum_{j \in \mathbb{Z}} \int_{\mathbb{R}^d} \bigg| \int_{\mathbb{R}^d} \widehat{f}(\xi)   \overline{\widehat{\psi}((A^t)^{j} \xi)} e^{- 2\pi i \xi \cdot x} \; d\xi \bigg|^2 \; dx \\
 &= \sum_{j \in \mathbb{Z}} \int_{\mathbb{R}^d} \big|\widehat{f} (\xi) \widehat{\psi} ((A^t)^j \xi)\big|^2 \; d\xi \\
 &= \int_{\mathbb{R}^d} |\widehat{f} (\xi)|^2 \sum_{j \in \mathbb{Z}} |\widehat{\psi} ((A^t)^j \xi)|^2 \; d\xi, 
\end{align*}
where the penultimate step used Plancherel's formula. Hence,
\[
C_1 |\det(P)| \| \widehat{f} \|^2 \leq \int_{\mathbb{R}^d} |\widehat{f} (\xi)|^2 \sum_{j \in \mathbb{Z}} |\widehat{\psi} ((A^t)^j \xi)|^2 \; d\xi
\leq C_2 |\det(P)| \| \widehat{f} \|^2.
\]
Since this holds for arbitrary $f \in L^2 (\mathbb{R}^d)$, it follows that
\[
C_1 |\det(P)| \leq \sum_{j \in \mathbb{Z}} |\widehat{\psi} ((A^t)^j \xi)|^2 \leq C_2 |\det(P)|
\]
for a.e. $\xi \in \mathbb{R}^d$.
\end{proof}

\section*{Acknowledgements}
The second named author thanks Felix Voigtlaender for helpful discussions on the relation between discrete and continuous Parseval wavelet frames that motivated the proof approach used in this paper. For J.v.V., this research was funded in whole or in part by the Austrian Science Fund (FWF): 10.55776/PAT2545623. For open access purposes, the author has applied a CC BY public copyright license to any author-accepted manuscript version arising from this submission.

\bibliographystyle{abbrv}
\bibliography{bibl}

\end{document}